\def\eps{{\varepsilon}}
\newcommand{\cc}{{\mathbb C}}
\newcommand{\zz}{{\mathbb Z}}
\newcommand{\af}{{\mathbb A}}
\newcommand{\pp}{{\mathbb P}}
\newcommand{\II}{{\mathbb I}}
\newcommand{\spec}{\hbox{Spec }}
\newcommand{\Hom}{\hbox{Hom}}
\newcommand{\Pic}{\hbox{Pic}}
\newcommand{\Kod}{\hbox{Kod}}
\newcommand{\sh}{\mathscr}
\newcommand{\cal}{\mathcal} 
\newcommand{\Mg}{\mathcal{M}_g}
\newcommand{\Mgbar}{\overline{\mathcal{M}}_g}
\newcommand{\Fg}{\mathcal{F}_{g}}
\newcommand{\Fgn}{\mathcal{F}_{g,n}}
\theoremstyle{plain}
\newtheorem{theorem}{Theorem}[section]
\newtheorem{lemma}[theorem]{Lemma}
\newtheorem{proposition}[theorem]{Proposition}
\newtheorem{corollary}[theorem]{Corollary}
\theoremstyle{definition}
\newtheorem{remark}[theorem]{Remark}
\newtheorem{definition}[theorem]{Definition}
\title{Geometry of the moduli space of $n$-pointed K3 surfaces of genus 11}
\author{Ignacio Barros}
\address{
Humboldt-Universit\"{a}t zu Berlin\\
Institut f\"{u}r Mathematik\\
Unter den Linden 6, 10099 - Berlin\\ 
Germany.} 
\email[]{iabarros@math.hu-berlin.de}
\begin{document}

\maketitle

\begin{abstract}
We prove that the moduli space of polarized $K3$ surfaces of genus eleven with $n$ marked points is unirational when $n\leq 6$ and uniruled when $n\leq7$. As a consequence we settle a long standing but not proved assertion about the unirationality of $\cal{M}_{11,n}$ for $n\leq6$. We also prove that the moduli space of polarized $K3$ surfaces of genus eleven with $n\geq9$ marked points has non-negative Kodaira dimension.\\

\textbf{Mathematics Subject Classification (2010)}: 14J28, 14H10, 14J10, 14E08.
\end{abstract}


\nopagebreak



\section*{Introduction}

Let $\Fgn$ be the moduli space of tuples $(S,H,x_1,\ldots,x_n)$, where $(S,H)$ is a primitively polarized $K3$ surface of genus $g$ and $x_1,\ldots,x_n\in S$ are $n$ ordered marked points on $S$. Mukai in his celebrated series of papers \cite{M1, M2, M3, M4, M5} established structure theorems for $\cal{F}_g$ in the range $g\leq 12$ and $g=13,16,18,20$. Mukai's results imply the unirationality of $\cal{F}_g$ in this range. Farkas and Verra \cite[Thm. 1.1]{FV2} proved the rationality of $\cal{F}_{14,1}$ and therefore the unirationality of $\cal{F}_{14}$. On the other hand, it is known that $\cal{F}_g$ is of general type for $g>62$ and for some other values $g\geq 47$, cf. \cite[Thm. 1]{GHS}. The forgetful map 
$$u:\cal{F}_{g,n}\to\Fg$$ 
is a morphism fibered in Calabi-Yau varieties. By Iitaka's easy addition formula, 
$$\Kod(\cal{F}_{g,n})\leq \dim(\Fg)=19.$$ 
In particular, $\Fgn$ is never of general type when $n\geq 1$. Moreover, by \cite{Ka} 
$$\Kod(\cal{F}_{g,n+1})\geq \Kod(\Fgn).$$ 
Thus, when $\cal{F}_g$ is of general type, $\Kod(\Fgn)=19$. It is established in \cite[Thm. 5.1]{FV2} that $\cal{F}_{11,n}$ is unirational for $n=1$ and it has positive Kodaira dimension equal to $19$ for $n=11$. The question that concerns us is about the Kodaira dimension of $\cal{F}_{11,n}$ for $1< n\leq 10$. \\

In genus $g=11$, Mukai \cite{M6} proved that a general curve $C$ of genus $11$ has a unique $K3$ extension $S$ up to isomorphism with $\Pic(S)=\zz\cdot\left[C\right]$. This construction induces a rational map
$$\begin{array}{rcl}
\psi:\cal{M}_{11,n}&\dashrightarrow&\cal{F}_{11,n}\\
\left[C,x_1,\ldots,x_n\right]&\mapsto&\left(S,\cal{O}_S(C),x_1,\ldots,x_n\right),
\end{array}$$
which is dominant for $n\leq 11$ and birational for $n=11$.\\

It is claimed in \cite[Table 3]{L} that $\cal{M}_{11,n}$ is unirational for $n\leq 10$. But Logan's argument only establishes the uniruledness of $\cal{M}_{11,n}$ by means of the Mukai map $\psi$, which is birationally a $\pp^{11-n}$-bundle over $\cal{F}_{11,n}$. The birational description of the base is still missing when $n\leq 10$. The aim of this paper is to address this question. We prove the following theorem regarding the birational geometry of $\mathcal{F}_{11,n}$.

\begin{theorem}
\label{thm}
The moduli space $\cal{F}_{11,n}$ is unirational for $n\leq 6$ and uniruled for $n\leq 7$.
\end{theorem}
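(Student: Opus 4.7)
The natural first step is to transfer the problem to $\mathcal{M}_{11,n}$ via the Mukai map $\psi\colon\mathcal{M}_{11,n}\dashrightarrow \mathcal{F}_{11,n}$, which is birationally a $\mathbb{P}^{11-n}$-bundle for $n\leq 11$: unirationality (resp.\ uniruledness) of $\mathcal{F}_{11,n}$ is thus equivalent to the corresponding property of $\mathcal{M}_{11,n}$. The seed is the known unirationality of $\mathcal{F}_{11,1}$ from \cite[Thm.~5.1]{FV2}, which one hopes to extend to $n\leq 6$ (for unirationality) and $n\leq 7$ (for uniruledness).

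The core of the plan for unirationality is to exhibit a dominant rational map from a unirational variety onto $\mathcal{F}_{11,n}$ via a Mukai-type geometric model. A generic $(S,H)\in \mathcal{F}_{11}$ should arise as a distinguished section in some rational ambient variety $\mathcal{A}$, on which points can be specified rationally. Parameterizing an $n$-pointed K3 then reduces to choosing: (a) an ambient $\mathcal{A}$ (in a rational or unirational family), (b) $n$ points on $\mathcal{A}$, and (c) the linear/incidence data cutting out $S$ through the chosen points. When $\mathcal{A}$ is rational or unirational (so $\mathcal{A}^n$ is), and the extra incidence data lies in a projective space, the resulting parameter space is unirational. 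A dimension count $\dim(\text{moduli of }\mathcal{A})+n\cdot\dim\mathcal{A}+(\dim|\Lambda|-n)$ must match $\dim\mathcal{F}_{11,n}=19+2n$; verifying generic finiteness of the resulting map is what determines the cutoff, expected at exactly $n=6$.

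For the uniruledness statement at $n=7$, where the dimension count for unirationality should break down, I would instead exhibit a covering family of rational curves. The construction is to fix $(S,H,x_1,\dots,x_6)\in \mathcal{F}_{11,6}$ and vary $x_7$ along an irreducible rational curve on $S$ passing through it. By the Bogomolov--Mumford theorem a general $S$ contains many rational curves; for instance, nodal rational curves in $|H|$ exist in abundance, as the Yau--Zaslow count for a genus-$11$ K3 is positive. Relativizing this construction over a dominating family of $6$-pointed K3 surfaces -- ensuring that the rational curves move and are not contracted by the natural forgetful maps -- yields a dominating family of rational curves on $\mathcal{F}_{11,7}$.

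The main obstacle will be identifying the right geometric model for $\mathcal{F}_{11}$ that makes the unirationality construction go through precisely for $n\leq 6$, and verifying dominance and generic finiteness of the resulting rational map. The difficulty is concentrated in the genus-$11$-specific geometry -- the dimension counts are tight and the model must be chosen carefully so that the inverse ``extension'' $(S,H,x_1,\dots,x_n)\mapsto(\mathcal{A},\Lambda,p_1,\dots,p_n)$ makes sense generically. Once an appropriate model is in place, the dimension bookkeeping and the covering-curves argument for $n=7$ should proceed with essentially standard incidence-variety techniques.
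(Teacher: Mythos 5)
Your proposal does not follow the paper's route, and both halves have genuine gaps.

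For unirationality, the reduction to $\mathcal{M}_{11,n}$ via the Mukai fibration $\psi$ is circular here: the birational type of $\mathcal{M}_{11,n}$ for $2\le n\le 10$ is exactly the open question (Logan's unirationality claim was never proved; only uniruledness follows from $\psi$), so transferring the problem gains nothing. The ``Mukai-type ambient model $\mathcal{A}$'' you then invoke does not exist for genus $11$: this is precisely the genus in which the general polarized K3 is not a linear section of a homogeneous space, and Mukai's treatment of $g=11$ goes instead through the birational isomorphism between pairs $(S,H,C)$ and $\mathcal{M}_{11}$. The paper's actual mechanism is different: it degenerates hyperplane sections to irreducible $\delta$-nodal curves, shows that the normalization map $\mathcal{V}_{11,\delta,l}\to\mathcal{M}_{11-\delta,2\delta+l}\big/\zz_2^{\oplus\delta}$ is birational and that $\pi:\mathcal{V}_{11,\delta,l}\to\mathcal{F}_{11,\delta+l}$ is dominant when $3\delta+l\le 11$, and then takes $\delta=2$, $l=4$, so that $\mathcal{F}_{11,6}$ is dominated by a quotient of $\mathcal{M}_{9,8}$, which is unirational by Logan. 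Your plan contains no substitute for this transfer to lower genus, and the ``dimension bookkeeping'' you defer is where all the content lies.

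For uniruledness at $n=7$, your argument is not merely incomplete but proves too much: moving $x_7$ along rational curves on $S$ would, if valid, show that $\mathcal{F}_{11,n+1}$ is uniruled for every $n$, contradicting the paper's Theorem \ref{thm3} that $\mathcal{F}_{11,9}$ has non-negative Kodaira dimension (and the known fact that $\Kod(\mathcal{F}_{11,11})=19$). The failure point is that a general $(S,H)$ has Picard rank one, so the rational curves in each $|kH|$ form a finite set (the Yau--Zaslow count is a finite number, not a positive-dimensional family); for fixed $k$ the incidence variety of tuples $(S,H,x_1,\ldots,x_7)$ with $x_7$ on a rational curve in $|kH|$ has dimension $19+12+0+1=32<33=\dim\mathcal{F}_{11,7}$, hence is a divisor, and the union over all $k$ is a countable union of divisors --- dense, but not a single dominating algebraic family of rational curves. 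The paper instead obtains uniruledness of $\mathcal{F}_{11,7}$ as the generically finite dominant image of $\mathcal{V}_{11,2,5}$, birational to $\mathcal{M}_{9,9}\big/\zz_2^{\oplus 2}$ (here $3\cdot 2+5=11$), with $\mathcal{M}_{9,9}$ uniruled by Farkas--Verra.
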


Using the already mentioned fibration $\psi:\cal{M}_{11,n}\dashrightarrow\cal{F}_{11,n}$ we obtain the following corollary.

\begin{corollary}
For $n\leq6$, the moduli space $\cal{M}_{11,n}$ is unirational.
\end{corollary}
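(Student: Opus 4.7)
The plan is to deduce the corollary as a formal consequence of Theorem \ref{thm} via the Mukai rational map $\psi:\mathcal{M}_{11,n}\dashrightarrow\mathcal{F}_{11,n}$ already recalled in the introduction. For $n\leq 6$, Theorem \ref{thm} supplies a dominant rational map from some projective space $\mathbb{P}^{M}\dashrightarrow\mathcal{F}_{11,n}$, exhibiting the unirationality of the base. It remains to transfer this unirationality upstairs along $\psi$.

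For the second step I would exploit the structure of $\psi$. Since a general curve $C$ of genus $11$ has a unique $K3$ extension $S$ with $\mathrm{Pic}(S)=\mathbb{Z}\cdot[C]$ by Mukai's theorem, the fiber of $\psi$ over a general $(S,\mathcal{O}_{S}(C),x_{1},\ldots,x_{n})$ is identified with the sublinear system of $|C|\cong\mathbb{P}^{11}$ consisting of curves through $x_{1},\ldots,x_{n}$, which is generically a $\mathbb{P}^{11-n}$. Thus $\psi$ is birationally a Zariski-locally trivial $\mathbb{P}^{11-n}$-bundle, being the projectivization of the relative pushforward $\pi_{*}\mathcal{I}_{x_{1},\ldots,x_{n}}(C)$ along the universal family over (an open subset of) $\mathcal{F}_{11,n}$. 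Pulling this projective bundle back along $\mathbb{P}^{M}\dashrightarrow\mathcal{F}_{11,n}$ produces a $\mathbb{P}^{11-n}$-bundle over an open subset of a projective space; such a bundle is itself rational of dimension $M+11-n$, and the natural map to $\mathcal{M}_{11,n}$ is dominant.

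No genuine obstacle arises: all the geometric content sits in Theorem \ref{thm} together with Mukai's genus $11$ uniqueness result, and the step from unirationality of $\mathcal{F}_{11,n}$ to that of $\mathcal{M}_{11,n}$ is the standard principle that a projective bundle over a unirational base is unirational. One minor bookkeeping point is that one must restrict to an open locus where the $n$ marked points impose independent linear conditions on $|C|$ so that the fibers of $\psi$ are honest projective spaces of the expected dimension; this is automatic for $n\leq 6\leq 11$ by a general position argument on a generic K3 of genus $11$.
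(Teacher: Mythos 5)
Your argument is correct and is exactly the paper's (implicit) one: the corollary is deduced from Theorem \ref{thm} together with the fact, already recorded in the introduction, that Mukai's map $\psi:\cal{M}_{11,n}\dashrightarrow\cal{F}_{11,n}$ is birationally a $\pp^{11-n}$-bundle, so unirationality of the base lifts to the total space. The paper offers no further detail, and your bookkeeping about the points imposing independent conditions on $|C|$ is a reasonable (if routine) supplement.
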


In \cite{M6} it was proven that the space
$$\cal{V}_{g,0}=\left\{\right(S,H,C)\mid (S,H)\in \cal{F}_g, C\in|H|\hbox{ smooth}\}$$
is birational to $\Mg$ when $g=11$. Mukai's proof consists in constructing an explicit birational inverse for the forgetful map $p:(S,H,C)\mapsto \left[C\right]\in\cal{M}_{11}$. An alternative proof of the same fact was given by \cite[Prop. 4.4]{CLM}, where they showed that the fibers of $p$ are irreducible and, in the genus $11$ case, zero dimensional. Our results follows from the study of the map $p$ when the curve $C$ degenerates to an irreducible $\delta$-nodal curve. We call $\cal{V}_{g,\delta,l}$ the moduli of irreducible $\delta$-nodal curves with $l$ marked points on a polarized $K3$ surface;
$$\cal{V}_{g,\delta,l}:=\left\{(S,H,X,x_1,\ldots,x_\delta,y_1,\ldots,y_l)\left|\begin{aligned} 
&(S,H)\in\cal{F}_{g}, \\
&X\in|H|\hbox{ irreducible $\delta$-nodal at $x_1,\ldots,x_\delta$}\\
&\hbox{and }y_1,\ldots,y_l\in X.\end{aligned}\right\}\right.$$
The main diagram to consider is the following:
\begin{displaymath}
\xymatrix{
&\cal{V}_{g,\delta,l}\ar[dl]_{\pi}\ar[dr]^{c_{g,\delta,l}}&\\
\cal{F}_{g,\delta+l}&&\mathcal{M}_{11-\delta,2\delta+l}\big/\zz_{2}^{\oplus \delta}.
}
\end{displaymath}
\\
The map $\pi$ on the left forgets the nodal curve
$$\pi:(S,H,X,x_1,\ldots,x_{\delta},y_1,\ldots,y_l)\mapsto(S,H,x_1,\ldots,x_{\delta},y_1,\ldots,y_l)$$ 
and the map on the right is the one induced by normalization, defined as
$$c_{g,\delta,l}:(S,H,X,x_1,\ldots,x_{\delta},y_1,\ldots,y_l)\mapsto[C,p_1+q_1,\ldots,p_\delta+q_\delta, y_1,\ldots,y_l],$$
where $C\to X$ is the normalization map and $p_i+q_i$ is the preimage of the $i$-th marked node. Here the $\delta$ copies of $\zz_2$ permute the first $\delta$ pairs of points.\\ 

By deformation theoretic arguments we prove in \S 2 that  $c_{g,\delta,l}$ is dominant in the range $3\leq g\leq 11$, $\delta\leq g-2$, and $g\neq 10$. In the same section we also prove that $\pi$ is dominant in the same range, granted that $3\delta+l\leq g$.\\

In \S3 we show that for $g=11$ the map $c_{11,\delta,l}$ is finite and the general fiber is irreducible. This gives us the following intermediate result.

\begin{theorem}
\label{thm2}
The moduli map induced by the normalization 
$$\mathcal{V}_{g,\delta,l}\to\mathcal{M}_{g-\delta,2\delta+l}\big/\zz_{2}^{\oplus \delta}$$
is dominant for $3\leq g\leq 11$, $\delta\leq g-2$ and $g\neq 10$. Moreover, for $g=11$, the map above is a birational isomorphism. 
\end{theorem}

In the last section we show that, in the same range as above, when $3\delta+l=g$ the map $\pi$ is birational. In genus eleven this gives us the following result.

\begin{theorem}
\label{bir}
When $3\delta+l=11$, there exists a birational isomorphism 
$$\mathcal{F}_{11,\delta+l} \overset{\sim}{\dashrightarrow}\mathcal{M}_{11-\delta,2\delta+l}\big/\zz_2^{\oplus \delta}.$$
In particular $\mathcal{F}_{11,9}$ is birational to the $\zz_2$-quotient of $\mathcal{M}_{10,10}$.
\end{theorem}

It is known that the canonical class $K_{\overline{\mathcal{M}}_{10,10}}$ is effective, cf. \cite[Thm 1.9]{FP}. We show in the last section that the same holds for the $\zz_2$-quotient. This, together with the inequality $\hbox{Kod}(\Fgn)\leq \hbox{Kod}(\mathcal{F}_{g,n+1})$, gives us the following theorem.

\begin{theorem}
\label{thm3}
The Kodaira dimension of $\mathcal{F}_{11,n}$ is non-negative for $n\geq 9$.
\end{theorem}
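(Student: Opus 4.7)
The plan is to use Theorem~\ref{thm2} with $\delta=1$, $l=8$ to realize $\mathcal{F}_{11,9}$ birationally as a finite quotient of $\mathcal{M}_{10,10}$. For these values, $\delta+l=9$ and $3\delta+l=11$, so the forgetful map $\pi:\mathcal{V}_{11,1,8}\to\mathcal{F}_{11,9}$ is dominant with generically finite fibers. Indeed, a general fiber parameterizes curves $X\in|H|$ with a node at $x_{1}$ passing through eight further general points $y_{1},\ldots,y_{8}$; since $\dim|H|=11$, imposing a node at $x_{1}$ is codimension $3$ and each $y_{i}$ is one condition, for an expected fiber dimension of zero.

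The first step is to verify that $\pi$ is birational, i.e.\ that the general fiber is a single reduced point. Adapting the argument of \cite[Prop.~4.4]{CLM} to the $1$-nodal setting, one shows that on a general $(S,H)\in\mathcal{F}_{11}$ and for generic $x_{1},y_{1},\ldots,y_{8}\in S$ the linear system
\begin{equation*}
\left|\mathcal{I}_{x_{1}}^{2}\otimes\mathcal{I}_{y_{1}}\otimes\cdots\otimes\mathcal{I}_{y_{8}}\otimes H\right|
\end{equation*}
consists of a single irreducible curve whose only singularity is a node at $x_{1}$. Combined with Theorem~\ref{thm2}, this yields the birational isomorphism $\mathcal{F}_{11,9}\sim_{\mathrm{bir}}\mathcal{M}_{10,10}/\mathbb{Z}_{2}$, where $\mathbb{Z}_{2}$ permutes the two preimages of the node under normalization.

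The second step is to invoke the non-negativity of the Kodaira dimension of $\mathcal{M}_{10,10}$, which can be obtained by the methods of Logan \cite{L}: combining the pullback of a Brill--Noether divisor on $\overline{\mathcal{M}}_{10}$ with positive multiples of the $\psi$-classes and appropriate boundary corrections produces an effective pluricanonical divisor on $\overline{\mathcal{M}}_{10,10}$. Since a general pointed genus-$10$ curve admits no automorphism exchanging two of its marked points, the $\mathbb{Z}_{2}$ action is free outside a locus of codimension at least two, so the quotient preserves the Kodaira dimension. Hence $\Kod(\mathcal{F}_{11,9})=\Kod(\mathcal{M}_{10,10})\geq 0$.

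The principal obstacle is the first step: establishing that the above linear system has exactly one element and that this element is an irreducible nodal curve. A dimension count gives only that the fiber of $\pi$ is finite; uniqueness and irreducibility require exploiting the specific geometry of genus-$11$ $K3$ surfaces, in the spirit of \cite{CLM,M8}, to rule out reducible specializations, worse singularities at $x_{1}$, or additional singularities for generic choice of marked points. A secondary technical point is justifying that the swap action on $\mathcal{M}_{10,10}$ has no codimension-one fixed locus, which rests on the triviality of the automorphism group of a general pointed curve of genus~$10$.
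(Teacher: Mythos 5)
Your first step coincides with the paper's route: the birationality of $\pi:\mathcal{V}_{11,1,8}\to\mathcal{F}_{11,9}$ when $3\delta+l=11$ (proved there via the blow-up $\tilde S\to S$, the dominance Corollary, and the dimension count showing $\pp\left(H^0(S,H\otimes\mathcal{I}_{2x+y})\right)$ is a single point), combined with Theorem \ref{thm2}, gives $\mathcal{F}_{11,9}\sim_{\mathrm{bir}}\mathcal{M}_{10,10}/\zz_2$. So far so good.

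The genuine gap is in your second step. You assert that because a general pointed genus-$10$ curve has no automorphism swapping two markings, the $\zz_2$-action is free outside codimension two, ``so the quotient preserves the Kodaira dimension.'' Two things go wrong. First, Kodaira dimension must be computed on a smooth projective model, and on the natural one, $\overline{\mathcal{M}}_{10,10}$, the involution swapping the first two markings fixes the boundary divisor $\delta_{0:\{1,2\}}$ pointwise at its generic point: there the two swapped points lie on a rational tail together with one node, and the transposition is realized by an automorphism of that $\pp^1$ fixing the node. Hence the quotient map is ramified in codimension one exactly where the comparison of canonical bundles happens, and the ``free outside codimension two'' premise fails on the compactification. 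Second, even where the action is free, the inequality you need, $\Kod(X/G)\geq\Kod(X)$, is the hard direction; generic freeness by itself only gives $\Kod(X/G)\leq\Kod(X)$ via $q^*K_{X/G}\leq K_X$. To get effectivity of a pluricanonical class downstairs you must exhibit a $G$-invariant effective pluricanonical divisor and control both the ramification divisor and the discrepancies of the quotient singularities; none of this is supplied by quoting Logan's divisor on $\overline{\mathcal{M}}_{10,10}$, which there is no reason to expect to be $\zz_2$-invariant or to contain $\delta_{0:\{1,2\}}$ with the required multiplicity.

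The paper avoids this entirely: it composes the further quotient $\overline{\mathcal{M}}_{10,10}/\zz_2\to\overline{\mathcal{M}}_{10,10}/\Sigma_{10}$ with the birational identification of the latter with the universal Jacobian $\overline{\sh{J}ac}^{10}_{10}$, which has Kodaira dimension $0$ by Farkas--Verra. Since a dominant generically finite rational map can only increase Kodaira dimension of the source relative to the target (the easy direction), this yields $\Kod(\overline{\mathcal{M}}_{10,10}/\zz_2)\geq 0$ with no analysis of fixed loci or invariant sections. You should replace your second step by this argument, or else carry out in detail the descent of an invariant pluricanonical form through the ramification along $\delta_{0:\{1,2\}}$, which is substantially more work.
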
 

From the fibration $\psi: \mathcal{M}_{11,n}\dashrightarrow\mathcal{F}_{11,n}$ for $n\leq 11$ it follows:

\begin{corollary}
The moduli spaces $\mathcal{M}_{11,9}$ and $\mathcal{M}_{11,10}$ are not unirational.
\end{corollary}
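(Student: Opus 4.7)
The plan is to deduce this corollary from Theorem \ref{thm3} by transporting the non-negativity of the Kodaira dimension back along the Mukai rational map $\psi:\mathcal{M}_{11,9}\dashrightarrow\mathcal{F}_{11,9}$ introduced in the introduction.

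First I would verify that $\psi$ is dominant at $n=9$. The introduction records that $\psi$ is dominant in the whole range $n\leq 11$: Mukai's theorem that a general genus $11$ curve $C$ admits a unique polarized $K3$ extension $(S,\mathcal{O}_S(C))$ with $\Pic(S)=\zz\cdot[C]$ implies that the assignment $[C,x_1,\ldots,x_n]\mapsto(S,\mathcal{O}_S(C),x_1,\ldots,x_n)$ is well-defined on a dense open set and surjects onto a dense open subset of $\mathcal{F}_{11,n}$. The dimensions match as expected: $\dim\mathcal{M}_{11,9}=39$ and $\dim\mathcal{F}_{11,9}=19+2\cdot 9=37$, so the generic fibre of $\psi$ has dimension $2=11-9$, corresponding to the two-dimensional linear subsystem of $|H|\cong\pp^{11}$ cut out by the $9$ marked points.

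Next I would argue by contradiction. Assume $\mathcal{M}_{11,9}$ is unirational. Then there is a dominant rational map $\pp^{N}\dashrightarrow\mathcal{M}_{11,9}$ for some $N$, and composing with $\psi$ yields a dominant rational map $\pp^{N}\dashrightarrow\mathcal{F}_{11,9}$. Hence $\mathcal{F}_{11,9}$ is itself unirational, and in particular $\Kod(\mathcal{F}_{11,9})=-\infty$: any nonzero pluricanonical section on a smooth projective model of $\mathcal{F}_{11,9}$ would pull back through the indeterminacy locus (which has codimension at least $2$ in the smooth variety $\pp^{N}$) to a nonzero pluricanonical section on $\pp^{N}$, of which there are none. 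This contradicts Theorem \ref{thm3}, so the corollary follows.

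The only substantive input beyond Theorem \ref{thm3} is the dominance of $\psi$ at $n=9$, which is essentially Mukai's extension theorem together with the trivial observation that on a fixed $(S,H)$ the $n$ marked points can be chosen freely on $S$. Once this is granted the deduction is formal: unirationality is preserved under dominant rational images and forces Kodaira dimension $-\infty$, so there is no real obstacle and the corollary is a soft consequence of Theorem \ref{thm3}.
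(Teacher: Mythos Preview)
Your argument is correct and matches the paper's own reasoning: the paper simply says the corollary follows ``from the fibration $\psi:\mathcal{M}_{11,9}\dashrightarrow\mathcal{F}_{11,9}$'' together with Theorem~\ref{thm3}, which is exactly the dominance-plus-Kodaira-dimension deduction you spell out. You have supplied the standard details (dominance of $\psi$ for $n\leq 11$, preservation of unirationality under dominant rational images, and the resulting contradiction with $\Kod(\mathcal{F}_{11,9})\geq 0$) that the paper leaves implicit.
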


\section*{Acknowledgements.}

Part of this work was carried out when I visited UGA in September 2017. I sincerely thank Scott Mullane and Benjamin Bakker for the invitation, hospitality, and mathematical suggestions. I am grateful to my PhD advisors Gavril Farkas and Rahul Pandharipande for their infinite patience, permanent support, and vast insight. Thanks also goes to an anonymous referees who pointed out style oversights and a crucial mathematical mistake in an older version of this paper. My PhD studies are generously supported by the Einstein Stiftung Berlin through the grant 8731100599.

\section{Nodal curves on $K3$ surfaces}
For positive integers $\delta\leq g$, the \textit{Severi variety} of irreducible $\delta$-nodal curves in the linear system $|H|$ is denoted by $V_{\delta}(S,H)$. It is well known that for $(S,H)\in\cal{F}_g$ general and $g\geq 2$, the space $V_\delta(S,H)$ is non-empty and each irreducible component is of dimension $g-\delta$. We refer to \cite{C1}, \cite[Cor. 1.2]{C2}, \cite{T}, and \cite{Fl} for fundamental facts on this matter.\\

In a similar way, we can mark the nodes. For a pointed polarized $K3$ surface $(S,H,x_1,\ldots, x_\delta)$ we denote by $V_{\delta}(S,H,x_1,\ldots,x_\delta)$ the space of irreducible $\delta$-nodal curves $X\in |H|$, with nodes at $x_1,\ldots,x_\delta$. Notice that 
$$\bigcup_{x_1,\ldots,x_\delta\in S}V_{\delta}(S,H,x_1,\ldots,x_\delta)\big/\Sigma_\delta\cong V_{\delta}(S,H),$$
where $\Sigma_\delta$ is the symmetric group of degree $\delta$.

\begin{definition}
For integers $g$ and $\delta$, such that $g\geq3$ and $0\leq\delta\leq g-2$, we define the \textit{universal Severi variety} $\cal{V}_{g,\delta}$ to be the algebraic stack whose coarse moduli space parameterizes tuples $(S,H,X,x_1,\ldots,x_\delta)$, where $(S,H,x_1,\ldots,x_\delta)\in\cal{F}_{g,\delta}$ and $X\in V_\delta(S,H,x_1,\ldots,x_\delta)$. There is a natural forgetful map
$$\mathcal{V}_{g,\delta}\to\Mgbar$$
that remembers the nodal curve. The moduli space $\mathcal{V}_{g,\delta,l}$ is defined as the fiber product 
$$\mathcal{V}_{g,\delta}\times_{\Mgbar}\overline{\mathcal{M}}_{g,l},$$
where $\overline{\mathcal{M}}_{g,l}\to\Mgbar$ is the map that forgets the marked points.
\end{definition}

The stack $\cal{V}_{g,\delta,l}$ is smooth and every irreducible component has dimension $19+l+(g-\delta)$. It was conjectured by Ciliberto and Dedieu \cite[Thm. 2.1]{CD} that the quotient $\cal{V}_{g,\delta}\big/\Sigma_\delta$ is always irreducible and they proved it in the range $3\leq g\leq 11$, $g\neq 10$ and $0\leq\delta\leq g$.  The forgetful map 
$$\pi_\delta:\cal{V}_{g,\delta}/\Sigma_\delta\to\cal{F}_g$$
is smooth and dominant when restricted to any irreducible component; see \cite[Prop. 4.8]{FlKPS}.\\

Let $(S,X,x_1,\ldots,x_\delta)\in\cal{V}_{g,\delta}$ be a $\delta$-nodal curve on a K3 and $\nu:C\to X$ the normalization map with $\nu^{*}(x_i)=p_i+q_i$. There are natural moduli maps to consider

\begin{displaymath}
\xymatrix{
&\cal{V}_{g,\delta}\ar[dl]^{\pi}\ar[r]^{c_{g,\delta}}\ar[dr]_{c_g}&\cal{M}_{g-\delta,[2\delta]}\ar[d]\\
\cal{F}_{g,\delta}&&\cal{M}_{g-\delta},
}
\end{displaymath}
where 
$$\cal{M}_{g-\delta,[2\delta]}:=\cal{M}_{g-\delta,2\delta}\big/ \zz_2^{\oplus \delta}.$$
Here each copy of $\zz_2$ permutes the pair of points $(p_i,q_i)$ and the map $c_{g,\delta}$ is defined by
$$c_{g,\delta}:(S,X,x_1,\ldots,x_\delta)\mapsto \left[C,p_1+q_1,\ldots,p_\delta+q_\delta\right].$$ 

It was proved in \cite[Thm. 5.1]{FlKPS} that for $3\leq g\leq 11$ and $0\leq \delta\leq g-2$, the map $c_g$ restricted to any irreducible component is dominant and the dimension of the general fiber is $22-2(g-\delta)$. This was generalized for higher genus in \cite{CFGK}. In previous work we were able to extend their result using similar techniques.

\begin{theorem}[Thm. 0.5 in \cite{Ba}]
For $3\leq g\leq 11$, $1\leq \delta\leq g-2$ and $g\neq 10$, the moduli map $c_{g,\delta}$ defined above is dominant when restricted to any irreducible component and the dimension of the general fiber is $22-2g$.
\end{theorem}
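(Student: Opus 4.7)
The plan is to adapt the deformation-theoretic argument of \cite{FlKPS} for $c_g$ by keeping track of the extra pair of preimages of each node. First I would verify the numerics: since $\dim\cal{V}_{g,\delta}=19+(g-\delta)$ and
$$\dim\cal{M}_{g-\delta,[2\delta]}=3(g-\delta)-3+2\delta=3g-\delta-3,$$
the expected relative dimension of $c_{g,\delta}$ equals $22-2g$, which is non-negative precisely when $g\leq 11$. This matches the bound in the statement and explains why the problem becomes interesting in Mukai's unirationality range.

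Next I would observe that $c_g$ is the composition of $c_{g,\delta}$ with the tautological forgetful morphism
$$f:\cal{M}_{g-\delta,[2\delta]}\longrightarrow\cal{M}_{g-\delta},$$
a smooth fibration of relative dimension $2\delta$. Since by \cite{FlKPS} the map $c_g$ is already known to be dominant on every irreducible component of $\cal{V}_{g,\delta}$, with general fiber of dimension $22-2(g-\delta)$, it suffices to prove that, for general $(S,X,x_1,\ldots,x_\delta)$ with normalization $\nu:C\to X$ and $\nu^{-1}(x_i)=\{p_i,q_i\}$, the fiber of $c_g$ through this point dominates, via $(S',X',\vec x')\mapsto (p_1'+q_1',\ldots,p_\delta'+q_\delta')$, the fiber of $f$ over $[C]$.

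At the infinitesimal level this amounts to showing that the tangent space $T$ to the fiber of $c_g$ at $(S,X,\vec x)$ surjects onto the relative tangent space of $f$, which at $[C,p_1+q_1,\ldots,p_\delta+q_\delta]$ is canonically identified with
$$\bigoplus_{i=1}^{\delta}\bigl(T_{p_i}C\oplus T_{q_i}C\bigr).$$
Vectors in $T$ correspond to equisingular deformations of $(S,X)$ that fix the isomorphism class of the abstract curve $C$, and the derivative of the pair-forgetting map records how the two preimages of each node drift apart on $C$. The source has dimension $22-2(g-\delta)$ and the target has dimension $2\delta$, so for $g\leq 11$ there is exactly enough room to hope for surjectivity, with equality forcing the general fiber to have dimension $22-2g$.

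The main obstacle is establishing this surjectivity. The route I would pursue, mirroring \cite{FlKPS}, is a controlled degeneration of $(S,H)$ to a reducible limit $K3$ (for instance a union of two surfaces meeting along an elliptic curve, or a smoothing of such) with $X$ degenerating to a compatible nodal curve; on such a model the node-by-node analysis of first-order deformations becomes tractable, and one can hope to exhibit, for each $i$, an explicit deformation moving only the pair $(p_i,q_i)$ while leaving the abstract complex structure of $C$ and the remaining pairs unchanged. Linear independence of these contributions then yields the required surjectivity. The exclusion $g=10$ tracks the only open case of Ciliberto--Dedieu's irreducibility result \cite{CD} for $\cal{V}_{g,\delta}/\Sigma_\delta$, while the bound $\delta\leq g-2$ ensures that $C$ has genus at least two, so that $\cal{M}_{g-\delta,[2\delta]}$ is a standard Deligne--Mumford target.
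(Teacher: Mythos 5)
Your dimension count is correct, and the reduction of the dominance of $c_{g,\delta}$ to the claim that a general fiber of $c_g$ dominates the corresponding fiber of the forgetful map $f:\cal{M}_{g-\delta,[2\delta]}\to\cal{M}_{g-\delta}$ is a legitimate strategy. But the proof stops exactly where the real work begins: the surjectivity of the tangent map from the $(22-2g+2\delta)$-dimensional fiber direction of $c_g$ onto the $2\delta$-dimensional space $\bigoplus_i\bigl(T_{p_i}C\oplus T_{q_i}C\bigr)$ is only announced as something one would ``hope to exhibit'' after an unspecified degeneration to a reducible limit $K3$. For $g=11$ this map must be an isomorphism, and you give no mechanism producing, node by node, an equisingular deformation that moves one pair $(p_i,q_i)$ while freezing the abstract curve and the other pairs; that assertion \emph{is} the theorem, and it is not supplied. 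The actual argument of \cite{Ba}, recalled in \S1--\S2 of the paper, is more direct and avoids the two-step factorization: the differential of $c_{g,\delta}$ is identified with $H^1(\tilde{S},\sh{F}_C(-E))\to H^1(C,T_C(-E\mid_C))$ on the blow-up of $S$ at the nodes (equivalently with $H^1(T_S\langle X\rangle\otimes\cal{I}_x)\to H^1(T_X)$ suitably twisted), and the long exact sequence of $0\to T_S(-X)\to T_S\langle X\rangle\to T_X\to 0$ identifies its cokernel with $H^2(S,T_S(-X))\cong H^0(S,\Omega_S^1(H))$, which vanishes for a general $(S,H)\in\cal{F}_g$ with $g\leq 11$, $g\neq 10$ by Beauville--Mukai. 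That single cohomological identification and vanishing is the missing idea.

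A second, smaller but genuine error: you attribute the exclusion of $g=10$ to the open case of the Ciliberto--Dedieu irreducibility theorem. Irreducibility of $\cal{V}_{g,\delta}\big/\Sigma_\delta$ is irrelevant here, since the statement is made component by component. Genus $10$ is excluded because $H^0(S,\Omega_S^1(H))\neq 0$ for a polarized $K3$ of genus $10$ (the same phenomenon behind the failure of the general genus-$10$ curve to lie on a $K3$), so the cokernel of the differential does not vanish and the dominance argument genuinely breaks down there.
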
 

As expected, when $g=11$ the map $c_{g,\delta}$ is generically finite and the situation is no different when we mark points on the nodal curve, since the map
$$c_{g,\delta,l}:\mathcal{V}_{g,\delta,l}\to \mathcal{M}_{g-\delta,[2\delta]+l}$$
is the fiber product
$$c_{g,\delta,l}=c_{g,\delta}\times_{\mathcal{M}_{g-\delta,[2\delta]}}q,$$
where $q$ is the forgetful map $\mathcal{M}_{g,[2\delta]+l}\to\mathcal{M}_{g,[2\delta]}$.\\

We recall the main arguments used in \cite{Ba} to prove the dominance and general fiber dimension of $c_{g,\delta}:\cal{V}_{g,\delta}\to \cal{M}_{g-\delta,[2\delta]}$ in the corresponding range.\\

The quotient map 
$$\cal{V}_{g,\delta}\to\cal{V}_{g,\delta}\big/\Sigma_\delta$$
is \'{e}tale and the tangent space of the quotient at a point $(S,X)$ corresponds to locally trivial first order deformations of the closed embedding $X\hookrightarrow S$. The deformation theory of such setting is governed by the sheaf $T_S\langle X\rangle$, defined to be the preimage of $T_X\subset T_S\mid_X$ under the restriction $T_S\to T_S\mid_X$, where $T_X$ is the tangent sheaf of the (nodal) curve $X$; see \cite[\S 3.4.4]{S}. More explicitly
$$T_{(S,X,x_1,\ldots,x_\delta)}\cal{V}_{g,\delta}\cong T_{(S,X)}\left(\cal{V}_{g,\delta}\big/\Sigma_\delta\right)\cong H^1(S,T_S\langle X\rangle)$$
and 
$$H^0(T_S\langle X\rangle)\cong H^2(T_S\langle X\rangle)=0.$$
The sheaf $T_S\langle X\rangle$ sits in the exact sequence
\begin{equation}
\label{ex.seq.0}
0\to T_S\langle X\rangle\to T_S\to \mathcal{N}'_{X/S}\to 0,
\end{equation}
where $\cal{N}'_{X/S}$ is the \textit{equisingular normal sheaf} of $X$ in S, cf. \cite[Prop. 1.1.9]{S}, whose zero cohomology group $H^0(\cal{N}'_{X/S})$ parameterizes locally trivial first order deformations of the closed embedding $X\subset S$, with $S$ fixed. \\

For a general point $(S,X,x_1,\ldots,x_\delta)\in\cal{V}_{g,\delta}$, let $\eps:\tilde{S}\to S$ be the blow up of $S$ at $x_1,\ldots,x_\delta$ and $E=E_1+\ldots+E_\delta$ the corresponding exceptional divisor. The normalization $f:C\to X\subset S$ is the restriction of $\eps$ to the proper transform of $X$ in $\tilde{S}$ and it lies in the linear system $|\eps^*H-2E|$. Consider the exact sequence
$$0\to T_C\to f^*T_S\to N_{f}\to0,$$
where $N_f$ is the normal sheaf of the map $f:C\to S$. Let $\lambda$ be the composition 
$$\lambda: \eps^*T_S\to f^*T_S\to N_f.$$ 
We call $\sh{F}_C$ the kernel of $\lambda$ and it sits in the following diagram:
\begin{equation}
\label{diagP5}
\xymatrix{
0\ar[r]&\sh{F}_C\ar[d]^{\tau}\ar[r]&\eps^*T_S\ar[d]\ar[r]^{\lambda}&N_f\ar@{=}[d]\ar[r]&0\\
0\ar[r]&T_C\ar[r]&f^*T_S\ar[r]&N_f\ar[r]&0.
}
\end{equation}

It is shown in \cite[Prop. 4.22]{FlKPS} that 
$$H^1(\tilde{S},\sh{F}_C)\cong H^1(S,T_S\langle X\rangle)$$
and $H^1(\tau)$ is the differential of the map $c:\cal{V}_{g,\delta}\to \cal{M}_{g-\delta}$. The space $\mathcal{V}_{g,\delta}$ in \cite{FlKPS} is $\mathcal{V}_{g,\delta}\big/\Sigma_\delta$ in our notation, but the results stays the same, as the quotient is \'{e}tale.\\

We proved \cite[Cor. 1.10 and Prop. 1.11]{Ba} that 
$$H^1(\tilde{S},\sh{F}_C(-E))\cong H^1(S,T_S\langle X\rangle)$$
and 
$$H^1(\tau(-E)):H^1(\tilde{S},\sh{F}_C(-E))\to H^1(T_C(-E\mid_C))$$
is the differential of $c_{g,\delta}$. When $(S,H)$ is general in $\cal{F}_g$, the cokernel of this map is isomorphic to $H^0(S,\Omega_S^1(H))$ that vanishes when $g\leq 11$ and $g\neq 10$, cf. \cite[\S5.2]{Be}. This establishes local dominance of 
$$c_{g,\delta}:\cal{V}_{g,\delta}\to\cal{M}_{g-\delta,[2\delta]}.$$

Finally when $g=11$, by dimension count the map 
$$c_{11,\delta,l}:\cal{V}_{11,\delta,l}\to\cal{M}_{11-\delta,[2\delta]+l}$$
is generically finite. In $\S3$ we show that the general fiber is irreducible.

\section{Deformation theory of pointed nodal curves on K3 surfaces}
The goal of this section is to show dominance of the map $c_{g,\delta,l}:\cal{V}_{g,\delta,l} \to\mathcal{M}_{g-\delta,[2\delta]+l}$ for $3\leq g\leq 11$, $0\leq \delta\leq g-2$, and $g\neq 10$. We also show the dominance of
$$\pi:\cal{V}_{g,\delta,l}\to\cal{F}_{g,\delta+l}$$
in the same range, when $3\delta+l\leq g$. With regard to the map $\pi$, notice that if we count dimensions naively, every marked point $y_1,\ldots,y_l\in S$ should impose one linear condition on the linear system $|H|$ and for a hyperplane section of $S\subset\pp^g$ to be nodal at $x_1,\ldots,x_\delta$ it has to contains all tangent $2$-planes at those points. Thus, every node should impose $3$ linear conditions on $|H|$. If the conditions imposed are independent, the map $\pi$ is expected be dominant when $3\delta+l\leq g$. 

\begin{definition}
Let $S$ be a K3 surface, $X\subset S$ a (nodal) curve and $y_1,\ldots,y_l\in X$ marked points on the curve, away from the nodes. We define 
$$T_S\langle X\mid y_1,\ldots,y_l\rangle\subset T_S\otimes \mathcal{I}_{y_1+\ldots+y_l}$$
to be the inverse image of $T_X(-y_1-\ldots-y_l)\subset \left(T_S\otimes \mathcal{I}_{y_1+\ldots+y_l}\right)\mid_X$ under the natural restriction
$$T_S\otimes \mathcal{I}_{y_1+\ldots+y_l}\to \left(T_S\otimes \mathcal{I}_{y_1+\ldots+y_l}\right)\mid_X.$$
The sheaf $T_S\langle X\mid y_1,\ldots,y_l\rangle$ is called the \textit{sheaf of germs of tangent vectors of the pointed K3 $(S,y_1,\ldots,y_l)$ which are tangent to the pointed curve $(X,y_1,\ldots,y_l)$.}
\end{definition}

To simplify the notation we write $y=y_1+\ldots+y_l$. The sheaf $T_S\langle X\mid y_1,\ldots,y_l\rangle$ sits in two exact sequences coming from restriction and inclusion respectively
\begin{equation}
\label{ex.seq.1}
\begin{aligned}
0\to T_S(-X)\otimes \mathcal{I}_{y}\to T_S\langle X\mid y_1,\ldots,y_l\rangle\to T_X(-y)\to0,
\end{aligned}
\end{equation}
\begin{equation}
\label{ex.seq.2}
\begin{aligned}
0\to T_S\langle X\mid y_1,\ldots,y_l\rangle\to T_S\otimes \mathcal{I}_{y}\to \mathcal{N}'_{X/S}(-y)\to0.
\end{aligned}
\end{equation}
The following proposition can be found in \cite[\S 3.4.4]{S} without the markings. The case of closed embeddings with markings is straightforward to extend, but for sake of completeness we give a proof.

\begin{proposition}
\label{prop}
Locally trivial first order deformations of the pointed closed embedding 
$$(y_1,\ldots,y_l\in X\hookrightarrow S)$$
are parameterized by $H^1\left(S, T_S\langle X\mid y_1,\ldots,y_l\rangle\right)$. The spaces $H^0$ and $H^2$ of the same sheaf parameterize local automorphisms an obstructions respectively and they both vanish when $g\geq 3$ and $\delta\leq g-2$. In particular, $H^1\left(S, T_S\langle X\mid y_1,\ldots,y_l\rangle\right)$ is the tangent space of $\mathcal{V}_{g,\delta,l}$ at the point $(S,H,X,x_1,\ldots,x_\delta,y_1,\ldots,y_l)$.
\end{proposition}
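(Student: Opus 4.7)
The plan is to reduce the statement about the marked sheaf $T_S\langle X\mid y_1,\ldots,y_l\rangle$ to the analogous statement for $T_S\langle X\rangle$, which is already recalled in the preceding paragraphs. I would split the argument into two parts: the deformation-theoretic dictionary, and the cohomology vanishing.

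For the dictionary, I would follow the \v{C}ech-cohomological argument of Sernesi~\S 3.4.4 with the cosmetic modification of tracking the marked points. On an affine open cover $\{U_\alpha\}$ of $S$, a locally trivial first order deformation of the pointed embedding is recorded by automorphisms of the trivial deformations $U_\alpha\times\mathrm{Spec}(\mathbb{C}[\eps])$ on overlaps that preserve the embedded $X\cap U_{\alpha\beta}$ and fix each marked point $y_i\in U_{\alpha\beta}$. At first order, such an automorphism corresponds to a section $\theta\in T_S(U_{\alpha\beta})$ which vanishes along $y_1,\ldots,y_l$ (so lies in $T_S\otimes\mathcal{I}_y$) and whose restriction to $X$ lies in $T_X(-y)$---by definition, exactly a section of $T_S\langle X\mid y_1,\ldots,y_l\rangle$. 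The usual cocycle/coboundary manipulations then yield the identification of $H^1$ with isomorphism classes of deformations, $H^0$ with infinitesimal automorphisms of the trivial deformation, and $H^2$ with obstructions.

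For the vanishing, the main ingredient is the short exact sequence
$$0\to T_S\langle X\mid y_1,\ldots,y_l\rangle\to T_S\langle X\rangle\to T_X|_y\to 0,$$
which I would derive by applying the snake lemma to the commutative diagram of vertical inclusions
$$\xymatrix{
0\ar[r]&T_S\langle X\mid y\rangle\ar[d]\ar[r]&T_S\otimes\mathcal{I}_y\ar[d]\ar[r]&\mathcal{N}'_{X/S}(-y)\ar[d]\ar[r]&0\\
0\ar[r]&T_S\langle X\rangle\ar[r]&T_S\ar[r]&\mathcal{N}'_{X/S}\ar[r]&0,
}$$
and identifying the left cokernel with the kernel of the surjection $T_S|_y\to\mathcal{N}'_{X/S}|_y$, which is $T_X|_y$ because $y_1,\ldots,y_l$ lie in the smooth locus of $X$. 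Since $T_X|_y$ is a skyscraper sheaf, $H^i(T_X|_y)=0$ for $i\geq 1$, and the long exact sequence in cohomology together with the already recalled vanishing $H^0(T_S\langle X\rangle)=H^2(T_S\langle X\rangle)=0$ at once gives $H^0=H^2=0$ for the marked sheaf. As a by-product one reads off $h^1(T_S\langle X\mid y_1,\ldots,y_l\rangle)=h^1(T_S\langle X\rangle)+l$, consistent with the expected dimension $19+(g-\delta)+l$ of $\mathcal{V}_{g,\delta,l}$.

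The main obstacle will be the bookkeeping in the first part: one must verify that $T_S\langle X\mid y\rangle$, whose definition is slightly subtle at the nodes of $X$, really parametrizes locally trivial deformations of the triple rather than some a priori smaller or larger subsheaf of $T_S\otimes\mathcal{I}_y$. However, Sernesi already treats the unmarked nodal case, and the adaptation to markings placed at smooth points of $X$ is formal, so no genuinely new difficulty should arise.
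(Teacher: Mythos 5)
Your proposal is correct and follows essentially the same route as the paper: the Čech/gluing dictionary adapted from Sernesi with the marked points tracked as vanishing conditions, and the vanishing of $H^0$ and $H^2$ deduced by comparing $T_S\langle X\mid y_1,\ldots,y_l\rangle$ with a sheaf of known vanishing cohomology modulo a skyscraper cokernel. The only cosmetic difference is that the paper gets $H^0=0$ from the inclusion into $T_S$ (via the second sequence in (\ref{ex.seq.})) while you extract both vanishings from the single comparison sequence with $T_S\langle X\rangle$; both work, and your explicit identification of the cokernel as $T_X|_y$ (valid since the $y_i$ avoid the nodes) is a nice bonus that also recovers $h^1=h^1(T_S\langle X\rangle)+l$.
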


\begin{proof}
Let $\sh{X}\hookrightarrow \sh{S}$ be a locally trivial first order deformation of $X\hookrightarrow S$ and 
$$\tilde{y}_1,\ldots,\tilde{y}_l:\II\to\sh{X}$$ 
the sections corresponding to the markings. Here $\II$ denotes the scheme $\spec(\cc[\eps]/\eps^2)$. Let $\{U_i\}_{i\in I}$ be an affine open cover of $S$, 
$$\{V_i=X\cap U_i\}_{i\in I}$$ 
the induced affine open cover of $X$ and $\{W_i=\left(\cup_k \{y_k\}\right)\cap U_i\}$ the induced cover of the zero dimensional scheme $y=y_1+\ldots+y_l\subset X$. Notice that some $W_i$ might be empty. As usual, every locally trivial first order deformation is obtained by gluing the trivial deformations 
$$W_i\times \II\hookrightarrow V_i\times\II\hookrightarrow U_i\times \II$$ 
along the intersections $W_{ij}\times \II$, $V_{ij}\times \II$, and $U_{ij}\times \II$. This is equivalent to give local automorphisms $(A_{ij}, B_{ij}, C_{ij})$ such that the following diagram commutes:
\begin{displaymath}
\xymatrix{
W_{ij}\times\II \ar[d]^{C_{ij}}\ar@{^{(}->}[r]&V_{ij}\times\II\ar[d]^{B_{ij}}\ar@{^{(}->}[r]&U_{ij}\times\II\ \ar[d]^{A_{ij}}\\
W_{ij}\times\II\ar@{^{(}->}[r]&V_{ij}\times\II\ar@{^{(}->}[r]&U_{ij}\times\II.
}
\end{displaymath}
This correspond to sections $D_{ij}\in \Gamma(U_{ij},T_S\otimes \mathcal{I}_{y})$, $d_{ij}\in \Gamma(V_{ij},T_X\otimes \mathcal{I}_{y})$ such that $D_{ij}\mid_X=d_{ij}$. This are sections $D_{ij}$ of $T_S\langle X\mid y_1,\ldots,y_l\rangle$. To check the cocycle condition and obstruction space is the same as without the markings, we refer to \cite[Prop. 1.2.9 and Prop. 1.2.12]{S} for details. The vanishing of $H^0$ follows from the vanishing of $H^0(T_S)=0$ and the inclusion in (\ref{ex.seq.2}). The vanishing of $H^2$ follows from the inclusion 
$$T_S\langle X\mid y_1,\ldots,y_l\rangle\subset T_S\langle X\rangle.$$
The cokernel of this inclusion is supported on the points $y_1,\ldots,y_l$ and $H^2(T_S\langle X\rangle)=0$ in the established range, cf. \cite[Prop. 4.8]{FlKPS}.
\end{proof}

We can give an alternative proof for the dominance of the normalization map. Let 
$$(S,X,x_1,\ldots,x_\delta,y_1,\ldots,y_l)\in \mathcal{V}_{g,\delta,l}$$
be a general point and $f:C\to X$ the normalization. We call 
$$p+q=p_1+q_1+\ldots+p_\delta+q_\delta$$ 
the preimage of the nodes and $y=y_1+\ldots+y_l$ the marked points.

\begin{proposition}
The differential of the normalization map 
$$dc_{g,\delta,l}:T\mathcal{V}_{g,\delta,l}\to T\mathcal{M}_{g-\delta,[2\delta]+l}$$
at a point $(S,H,X,x_1,\ldots,x_\delta,y_1,\ldots,y_l)\in \mathcal{V}_{g,\delta,l}$ can be identified with $H^1$ of the map coming from (\ref{ex.seq.1});
$$H^1\left(S,T_S\langle X\mid y_1,\ldots,y_l\rangle\right)\to H^1\left(X,T_X(-y)\right).$$
The identification on the right is given by the isomorphism induced by $f_*$
$$H^1\left(C,T_C(-(p+q)-y)\right)\overset{\sim}{\to} H^1\left(X,f_*\left(T_C(-(p+q)-y)\right)\right)\cong H^1(X,T_X(-y)).$$
\end{proposition}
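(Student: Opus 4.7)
The plan is to twist the identification of \cite[Prop. 1.8 and 1.9]{Ba} (used for the differential of $c_{g,\delta}$) by $\mathcal{I}_y$, and then rewrite the target in terms of the normalization $C$.

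The first step is to pin down the two tangent spaces. By Proposition~\ref{prop}, the tangent space to $\mathcal{V}_{g,\delta,l}$ at the chosen point is canonically $H^1(S,T_S\langle X\mid y_1,\ldots,y_l\rangle)$. The tangent space to $\mathcal{M}_{g-\delta,[2\delta]+l}$ at the pointed normalization $[C,p+q,y]$ is the standard deformation space $H^1(C,T_C(-(p+q)-y))$ of a smooth pointed curve, the $\zz_2^{\oplus\delta}$ quotient being encoded by forgetting the ordering within each pair $(p_i,q_i)$.

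The second step is to produce the map and show that it agrees with the differential. From the first exact sequence in (\ref{ex.seq.})
\begin{equation*}
0\to T_S(-X)\otimes\mathcal{I}_y\to T_S\langle X\mid y_1,\ldots,y_l\rangle\to T_X(-y)\to 0
\end{equation*}
I obtain an induced map $\rho\colon H^1(S,T_S\langle X\mid y_1,\ldots,y_l\rangle)\to H^1(X,T_X(-y))$. In terms of the \v{C}ech cocycles $\{D_{ij}\}$ constructed in the proof of Proposition~\ref{prop}, $\rho$ is nothing but the restriction $\{D_{ij}\}\mapsto\{D_{ij}|_X\}=\{d_{ij}\}$, which records the first-order deformation of the abstract pointed nodal curve $(X,y_1,\ldots,y_l)$ obtained by forgetting the ambient $K3$. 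Since $c_{g,\delta,l}$ is defined by passing to the simultaneous normalization of the deformed nodal curve (a functorial operation on locally trivial families of $\delta$-nodal curves, with the $2\delta$ extra sections given by preimages of the nodes), the differential of $c_{g,\delta,l}$ coincides with $\rho$ followed by the intrinsic identification below.

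The third step is the identification $H^1(X,T_X(-y))\cong H^1(C,T_C(-(p+q)-y))$ via $f_*$. Since $f\colon C\to X$ is finite, $R^if_*=0$ for $i>0$ and the Leray spectral sequence yields
\begin{equation*}
H^1\bigl(C,T_C(-(p+q)-y)\bigr)\cong H^1\bigl(X,f_*T_C(-(p+q)-y)\bigr).
\end{equation*}
A local computation at a node (working out the derivations of $\cc[x,y]/(xy)$, which are generated by $x\partial_x$ and $y\partial_y$, and comparing with the generators $s\partial_s$, $t\partial_t$ of $T_C(-(p+q))$ on the two branches) shows $f_*T_C(-(p+q))\cong T_X$; since the markings avoid the nodes, the projection formula then gives $f_*T_C(-(p+q)-y)\cong T_X(-y)$. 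The main obstacle is precisely this local identification, because $T_X$ is only rank-one torsion-free (not locally free) at the nodes, so one must check that the natural comparison map is an isomorphism on stalks. Once this is in hand, everything else is formal: the construction of $\rho$ from the restriction sequence, and the matching of $\rho$ with the geometric differential via functoriality of simultaneous normalization.
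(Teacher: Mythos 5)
Your proposal is correct and follows essentially the same route as the paper: identify the tangent spaces via Proposition~\ref{prop}, realize the differential as the restriction map coming from the first exact sequence in (\ref{ex.seq.}) (i.e.\ $\{D_{ij}\}\mapsto\{d_{ij}\}$ on \v{C}ech cocycles), and conclude with the local computation at a node showing $f_*T_C(-(p+q))\cong T_X$, which is exactly the paper's comparison of the generators $x\partial_x$, $y\partial_y$ of $T_X$ with the twisted tangent sheaves of the two branches.
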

\begin{proof}
From the proof of Proposition \ref{prop}, one can see that the map induced by restriction sends locally trivial first order deformations of $(S,X,y)$ to locally trivial first order deformations of $(X,y)$. On the other hand, around each node $X$ is given by $\{xy=0\}\subset U\subset \af^2$, $\Omega_U$ is generated by $dx, dy$ with the relation $xdy+ydx=0$ and $T_X=\Hom(\Omega_X, \mathcal{O})$ restricted to $U$ is generated by the maps $(dx,dy)\mapsto (x,0)$ and $(dx,dy)\mapsto(0,y)$. On the branch defined by $x$, the first generator is $x$ times the generator $dx\mapsto 1$ of the tangent bundle at that branch and the same for $y$. This means that locally around the node the normalization map gives us an isomorphism
$xT_{U_x}\oplus yT_{U_y}\cong T_U.$
These local isomorphisms coincide away from the nodes forming a global isomorphism  
$$f_*T_C(-(p+q))\cong T_X.$$
After tensoring with $\mathcal{O}(-y)$, the composition of this with the natural isomorphism 
$$H^1(C,T_C(-(p+q)-y))\cong H^1(X,f_*T_C(-(p+q)-y))$$
identifies first order deformations of $(X,y)$ preserving the nodes with deformations of $(C,y)$ together with the marked points $p+q$.
\end{proof}

As a corollary we have:

\begin{corollary}
The normalization map $c_{g,\delta,l}$ is dominant for $g\geq 3$, $\delta\leq g-2$ and $g\leq 11$ with $g\neq 10$.
\end{corollary}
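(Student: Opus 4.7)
The plan is to reduce dominance to a cohomological surjectivity statement for the differential, using the identification provided by the preceding proposition together with the two short exact sequences in \eqref{ex.seq.}. First I would feed the first of those sequences into its long exact sequence of cohomology. Combined with the vanishing $H^2\bigl(T_S\langle X\mid y_1,\ldots,y_l\rangle\bigr)=0$ recorded in Proposition \ref{prop}, this identifies the cokernel of
\[
H^1\bigl(S,T_S\langle X\mid y_1,\ldots,y_l\rangle\bigr)\longrightarrow H^1\bigl(X,T_X(-y)\bigr)
\]
with $H^2(S,T_S(-X)\otimes\mathcal{I}_y)$. So the problem reduces to showing that this group vanishes at a general point.

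Next I would twist the ideal sheaf sequence $0\to\mathcal{I}_y\to\mathcal{O}_S\to\mathcal{O}_y\to0$ by $T_S(-X)$. Since $T_S(-X)\otimes\mathcal{O}_y$ is a skyscraper supported at finitely many points, its $H^1$ vanishes, so the associated long exact sequence yields an injection $H^2(T_S(-X)\otimes\mathcal{I}_y)\hookrightarrow H^2(T_S(-X))$. Finally, Serre duality on the K3 surface $S$ (with $K_S\cong\mathcal{O}_S$) gives
\[
H^2(S,T_S(-X))^{\vee}\cong H^0(S,\Omega_S^1(X)),
\]
and the vanishing $H^0(S,\Omega_S^1(H))=0$ for general $(S,H)\in\cal{F}_g$ with $g\leq 11$ and $g\neq 10$ is precisely the fact recalled at the end of Section 1 (cf.\ \cite[Prop. 4.8]{FlKPS}). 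Since $\pi_\delta:\cal{V}_{g,\delta}/\Sigma_\delta\to\cal{F}_g$ is smooth and dominant on each irreducible component, a general point of $\cal{V}_{g,\delta,l}$ lies over a general $(S,H)$, so the vanishing applies and the differential is surjective at that point, giving dominance.

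I do not expect a real obstacle here: the argument is the natural pointed extension of the unmarked-point computation of \cite{FlKPS} and \cite{Ba}. The only new ingredient, the twist by $\mathcal{I}_y$, is automatically harmless because $y$ has codimension two in $S$, so the corresponding skyscraper contributes no $H^1$ to obstruct the reduction from $H^2(T_S(-X)\otimes\mathcal{I}_y)$ to $H^2(T_S(-X))$. Once the preceding proposition supplies the identification of the differential, the remaining steps amount to two short exact sequence computations plus Serre duality.
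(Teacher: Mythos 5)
Your proposal is correct and follows essentially the same route as the paper: identify the cokernel of the differential with $H^2(S,T_S(-X)\otimes\mathcal{I}_y)$ via the first sequence in \eqref{ex.seq.} and Proposition \ref{prop}, pass to $H^2(S,T_S(-X))$ using the skyscraper quotient, and conclude by Serre duality and the vanishing of $H^0(S,\Omega_S^1(H))$ for general $(S,H)$ with $g\leq 11$, $g\neq 10$ (the paper cites \cite[\S 5.2]{Be} for this last fact). The only cosmetic difference is that the comparison map $H^2(T_S(-X)\otimes\mathcal{I}_y)\to H^2(T_S(-X))$ is in fact an isomorphism, though the injectivity you use suffices.
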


\begin{proof}
By the exact sequence (\ref{ex.seq.1}) and Proposition \ref{prop}, the cokernel of $dc_{g,\delta,l}$ is isomorphic to 
$$H^2(S,T_S(-X)\otimes \mathcal{I}_y).$$
On the other hand, the cokernel of the inclusion $T_S(-X)\otimes \mathcal{I}_y\subset T_S(-X)$ is supported on the points $y$. Thus, 
$$H^2(S,T_S(-X)\otimes \mathcal{I}_y)\cong H^2(S,T_S(-X))\cong H^0(S,\Omega_S^1(X)).$$
The dimension of the last vector space for general $(S,\mathcal{O}_S(X))\in \mathcal{F}_g$ is zero when $g\neq 10$ and $g\leq 11$, cf. \cite[\S 5.2]{Be}.
\end{proof}

Recall that if $f:C\to X\subset S$ is the normalization map and $\mathcal{N}_f$ is the normal sheaf of $f$ defined by the exact sequence
$$0\to T_C\to f^*T_{S}\to N_f\to 0,$$ 
then, cf. \cite[Lemma 4.16]{FlKPS},

\begin{align}
\label{push}
f_*N_f\cong \mathcal{N}'_{X/S}
\end{align}
and from the exact sequence above, since $S$ is a K3 surface, $N_{f}\cong \omega_C$. Thus, $h^1(\mathcal{N}'_{X/S})=1$ and the kernel of the surjection coming from the exact sequence (\ref{ex.seq.0})
$$\gamma:H^1(S,T_S)\to H^1(X,\mathcal{N}'_{X/S})$$
is $19$-dimensional. It can be thought as the tangent space of $\mathcal{F}_g$ at $(S,\mathcal{O}_S(X))$ and 
$$0\to H^0(\mathcal{N}'_{X/C})\to H^1(T_S\langle X\rangle)\to \ker(\gamma)\to 0$$
as the natural differential sequence
$$0\to T_{[X]}\left(V_\delta(S,H)\right)\to T_{(S,X)}\mathcal{V}_{g,\delta}\to T_{(S,H)}\mathcal{F}_g\to 0.$$
Here $H=\mathcal{O}_S(X)$, see \cite[Prop. 4.8]{FlKPS}. Let $\eps:\tilde{S}\to S$ be the blow up of $S$ at the marked points $x_1,\ldots,x_\delta,y_1,\ldots,y_l$ and 
$$E=E_1+\ldots+E_\delta, \hspace{0.2cm} F=F_1+\ldots+F_l$$
the exceptional divisors corresponding to the nodes and marked points respectively. The proper transform $C$ of $X$ lies in the linear system $|\eps^*H-2E-F|$ and the following sequence is exact:
\begin{equation}
\label{ex.seq2}
\begin{aligned}
0\to \sh{F}_C(-E-F)\to\eps^*T_S(-E-F)\to N_f(-(p+q)-y)\to 0.\\
\end{aligned}
\end{equation}

\begin{lemma} 
\label{lem}
In the same setting as above
$$\eps_*\left(\sh{F}_C(-F)\right)\cong T_S\langle X\mid y_1,\ldots,y_l\rangle,\hspace{0.5cm} R^{i}\eps_*\left(\sh{F}_C(-E-F)\right)=R^{i}\eps_*\left(\sh{F}_C(-F)\right)$$
and the latter vanishes for $i>1$. Moreover, $H^1(\sh{F}_C(-E-F))\cong H^1(\sh{F}_C(-F))$.
\end{lemma}
\begin{proof}
There are maps as in the following diagram:
\begin{displaymath}
\xymatrix{
0\ar[r]&T_S\langle X\mid y_1,\ldots,y_l\rangle\ar[d]\ar[r]& T_S\otimes\mathcal{I}_{y}\ar[r]\ar[d]^{\cong}&\mathcal{N}'_{X/S}(-y)\ar[d]^{\cong}\ar[r]&0\\
0\ar[r]&\eps_*\left(\sh{F}_C(-F)\right)\ar[r]&\eps_*\left(\eps^*T_S(-F)\right)\ar[r]&\eps_*N_f(-y)\ar[r]&0.
}
\end{displaymath}
The lower row is the pushforward by $\eps$ of the upper row in (\ref{diagP5}), after twisting by $\mathcal{O}(-F)$. The map in the middle comes from the isomorphism $\eps_*\left(\mathcal{O}(-F)\right)\cong \eps_*\eps^*\mathcal{I}_y$ and the map on the right comes from the isomorphism (\ref{push}). This induces an isomorphism 
$$T_S\langle X\mid y_1,\ldots,y_l\rangle\cong \eps_*\left(\sh{F}_C(-F)\right).$$
This implies that $R^1\eps_*\left(\sh{F}_C(-F)\right)=0$.The rest follows from the fact that, cf. \cite[Prop. 1.9]{Ba},  
\begin{equation}
\label{FmidE}
\sh{F}_C\otimes\mathcal{O}_E\cong\sh{F}_C(-F)\otimes\mathcal{O}_E\cong \mathcal{O}_E(-1)^{\oplus 2}.
\end{equation} 
This, together with the exact sequence
$$0\to\sh{F}_C(-E-F)\to\sh{F}_C(-F)\to\mathcal{O}_E(-1)^{\oplus 2}\to0,$$
give us that $R^i\eps_*\left(\sh{F}_C(-F)\right)\cong R^i\eps_*\left(\sh{F}_C(-E-F)\right)$ and its vanishing for $i\geq 1$. As a consequence we have that $H^1(\sh{F}_C(-E-F))\cong H^1(\sh{F}_C(-F))$.
\end{proof}

Now consider the diagram coming from (\ref{ex.seq2}):

\begin{align}
\label{seq2}
\xymatrix{
H^1\left(\tilde{S},\sh{F}_C(-E-F)\right)\ar[r]&H^1\left(\tilde{S},\eps^*T_S(-E-F)\right)\ar[r]^{\alpha}\ar[dr]_{\beta}&H^1\left(C, N_f(-(p+q)-y)\right)\ar[d]\\
&&H^1\left(C,N_f\right).
}
\end{align}

\begin{proposition}
\label{diff.map}
The tangent space of $\mathcal{F}_{g,\delta+l}$ at $(S,H,x_1,\ldots,x_\delta,y_1,\ldots,y_l)$ can be identified with $\ker(\beta)$ and the differential of the map $\pi:\mathcal{V}_{g,\delta,l}\to\mathcal{F}_{g,\delta+l}$ is given by 
$$d\pi: H^1\left(S,\sh{F}_C(-E-F)\right)\to \ker(\alpha)\subset\ker(\beta).$$
In particular $\pi$ is dominant (on a component) when, for some point $(S,X,x_1,\ldots,x_\delta,y_1,\ldots,y_l)$, the map induced by inclusion
$$H^1\left(C,N_f(-(p+q)-y)\right)\to H^1\left(C,N_f\right)$$
is an isomorphism.
\end{proposition}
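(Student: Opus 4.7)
The plan is to identify the target tangent space and the differential in purely cohomological terms, and then read off the dominance criterion.

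First, using the projection formula together with $\eps_*\mathcal{O}_{\tilde{S}}(-E-F)=\mathcal{I}_{x+y}$ and the vanishing of $R^i\eps_*\mathcal{O}_{\tilde{S}}(-E-F)$ for $i\geq 1$, I would establish
$$H^1(\tilde{S},\eps^*T_S(-E-F))\cong H^1(S,T_S\otimes\mathcal{I}_{x+y}),$$
and note that the right-hand side parametrizes locally trivial first order deformations of the pointed K3 surface $(S,x_1,\ldots,x_\delta,y_1,\ldots,y_l)$ (this is a standard deformation-theoretic fact; cf.\ the argument in Proposition \ref{prop}). A direct computation using $h^0(T_S)=0$ and $h^1(T_S)=20$ gives $\dim H^1(T_S\otimes\mathcal{I}_{x+y})=20+2(\delta+l)$, one more than $\dim \mathcal{F}_{g,\delta+l}$. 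Thus $T_{(S,H,x,y)}\mathcal{F}_{g,\delta+l}$ must be cut out as the codimension one subspace of deformations of the marked surface along which the polarization $H$ extends.

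Second, to identify this subspace with $\ker(\beta)$, I would verify that $\beta$ computes the Kodaira--Spencer obstruction for extending $H$. This rests on a chain of identifications: $N_f\cong\omega_C$ since $S$ is K3; the pushforward $f_*N_f\cong \mathcal{N}'_{X/S}$ from (\ref{push}); and the isomorphism $H^1(X,\mathcal{N}'_{X/S})\cong H^2(S,\mathcal{O}_S)$ obtained via the natural map $H^1(\mathcal{N}'_{X/S})\to H^1(\mathcal{N}_{X/S})$ between one-dimensional spaces and the connecting homomorphism of $0\to\mathcal{O}_S\to\mathcal{O}_S(X)\to\mathcal{N}_{X/S}\to 0$. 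Tracing through, the sheaf map $\eps^*T_S(-E-F)\to N_f$ inducing $\beta$ matches the pullback to $\tilde{S}$ of the composition $T_S\otimes\mathcal{I}_{x+y}\to T_S\to T_S|_X\to\mathcal{N}'_{X/S}$, so $\beta$ becomes cup product with the Atiyah class $c_1(H)\in H^1(\Omega_S^1)$. This is precisely the obstruction to extending $H$, giving $\ker(\beta)=T_{(S,H,x,y)}\mathcal{F}_{g,\delta+l}$.

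Third, the differential of $\pi$ is read off directly from the long exact sequence associated to (\ref{ex.seq2}),
$$H^1(\tilde{S},\sh{F}_C(-E-F))\to H^1(\tilde{S},\eps^*T_S(-E-F))\xrightarrow{\alpha}H^1(C,N_f(-(p+q)-y)),$$
whose image is $\ker(\alpha)$. Since $\beta$ factors through $\alpha$ via the inclusion $N_f(-(p+q)-y)\hookrightarrow N_f$, one has $\ker(\alpha)\subset\ker(\beta)$, so $d\pi$ indeed lands in $T_{(S,H,x,y)}\mathcal{F}_{g,\delta+l}$. Whenever the induced map $H^1(N_f(-(p+q)-y))\to H^1(N_f)$ is an isomorphism, the two kernels coincide; then $d\pi$ surjects onto $\ker(\beta)$ at $(S,X,x,y)$ and $\pi$ is dominant at this point, hence generically.

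The principal technical obstacle is the second step: rigorously matching the abstractly defined cohomological map $\beta$ with the Atiyah--Spencer obstruction for the polarization. While both are natural maps to a one-dimensional target, confirming they agree requires a careful diagram chase through the identifications above. A cleaner alternative is to work directly with the fibre of $\pi$, which parametrizes equisingular deformations of $X\in|H|$ through $y_1,\ldots,y_l$ and has tangent space $H^0(\mathcal{N}'_{X/S}(-y))$; comparison with the long exact sequence of the first row of (\ref{ex.seq.}) then yields the desired identification $T\mathcal{F}_{g,\delta+l}\cong\ker(\beta)$ by dimension count, sidestepping the explicit computation of cup products.
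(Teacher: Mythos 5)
Your proposal is correct and follows essentially the same route as the paper: identify the right-hand triangle of (\ref{seq2}) via the Leray spectral sequence with the triangle on $S$ involving $T_S\otimes\mathcal{I}_{x+y}$ and $\mathcal{N}'_{X/S}$, interpret $\ker(\beta)$ as the deformations of the pointed surface preserving the polarization, and read the image of $d\pi$ off the long exact sequence of (\ref{ex.seq2}). Two remarks. First, the step you flag as the principal obstacle --- matching $\beta$ with the obstruction to extending $H$ --- does not require an Atiyah-class computation: the paper has already identified $\ker\bigl(H^1(T_S)\to H^1(\mathcal{N}'_{X/S})\bigr)$ with $T_{(S,H)}\mathcal{F}_g$ in Section 2 (via $f_*N_f\cong\mathcal{N}'_{X/S}$ and \cite[Prop. 4.8]{FlKPS}), and since $\beta$ factors through $H^1(T_S)\to H^1(\mathcal{N}'_{X/S})$ the first assertion follows directly; your dimension count $h^1(T_S\otimes\mathcal{I}_{x+y})=20+2(\delta+l)$ is consistent with this but not needed. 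Second, the one real omission: you take for granted that the connecting map $H^1(\tilde{S},\sh{F}_C(-E-F))\to H^1(\tilde{S},\eps^*T_S(-E-F))$ \emph{is} $d\pi$. This requires identifying the source with $T_{(S,X,x,y)}\mathcal{V}_{g,\delta,l}$, which is exactly what Lemma \ref{lem} supplies ($\eps_*(\sh{F}_C(-F))\cong T_S\langle X, y_1,\ldots,y_l\rangle$ and $H^1(\sh{F}_C(-E-F))\cong H^1(\sh{F}_C(-F))$, combined with Proposition \ref{prop} and the vanishing $R^i\eps_*\sh{F}_C(-E-F)=0$ for $i>0$), together with the check that the Leray isomorphisms intertwine the sheaf-level map with the forgetful map on first-order deformations; that compatibility check is the bulk of the paper's proof and should be written out. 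Your closing ``cleaner alternative'' via the fibre of $\pi$ is less reliable as stated, since the fibre fixes the positions of the nodes as well, so its tangent space is not simply $H^0(\mathcal{N}'_{X/S}(-y))$.
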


As a corollary we have:
\begin{corollary}
\label{dom}
In the range $3\leq g\leq 11$, $1\leq \delta\leq g-2$ and $g\neq 10$, the moduli map 
$$\pi:\cal{V}_{g,\delta,l}\to\cal{F}_{g,\delta+l}$$ is dominant when $3\delta+l\leq g$.
\end{corollary}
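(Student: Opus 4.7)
The plan is to use Proposition~\ref{diff.map}, which reduces dominance of $\pi$ at a point $(S,X,x_1,\ldots,x_\delta,y_1,\ldots,y_l)\in\cal{V}_{g,\delta,l}$ to verifying that the inclusion-induced map
$$H^1(C,N_f(-(p+q)-y))\;\longrightarrow\;H^1(C,N_f)$$
is an isomorphism. Since surjectivity of the differential is an open condition, it suffices to exhibit a single point in the range of the corollary at which this holds.

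First I would simplify the target. Because $S$ is a K3 surface, the exact sequence $0\to T_C\to f^*T_S\to N_f\to 0$ together with $\omega_S\cong\cal{O}_S$ yields $N_f\cong\omega_C$. Setting $D:=p+q+y$, a divisor of degree $2\delta+l$ on $C$, the map in question becomes $H^1(\omega_C(-D))\to H^1(\omega_C)$. From the cohomology sequence of
$$0\to\omega_C(-D)\to\omega_C\to\omega_C|_D\to 0$$
it is automatically surjective (the next term vanishes since $\omega_C|_D$ is a skyscraper), and it is an isomorphism precisely when the restriction $H^0(\omega_C)\to H^0(\omega_C|_D)$ is surjective, i.e.\ when $D$ imposes its maximal $2\delta+l$ independent conditions on the canonical system $|K_C|$. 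By Riemann--Roch and Serre duality this is equivalent to the rigidity condition $h^0(\cal{O}_C(D))=1$, and the necessary inequality $\deg D=2\delta+l\leq h^0(\omega_C)=g-\delta$ is exactly the hypothesis $3\delta+l\leq g$.

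To exhibit one such point I plan to pull rigidity back from the normalization side. A general pointed curve $(C,D)\in\cal{M}_{g-\delta,[2\delta]+l}$ with $\deg D\leq g-\delta$ satisfies $h^0(\cal{O}_C(D))=1$, as the Abel--Jacobi map $\mathrm{Sym}^{\deg D}C\to\Pic^{\deg D}(C)$ is generically injective in that range. By \cite[Thm.~0.5]{Ba}, recalled in Section~1, the normalization map $c_{g,\delta}:\cal{V}_{g,\delta}\to\cal{M}_{g-\delta,[2\delta]}$ is dominant in our range for $g$ and $\delta$, and the fiber-product identity $c_{g,\delta,l}=c_{g,\delta}\times_{\cal{M}_{g-\delta,[2\delta]}}\mathrm{Id}$ propagates dominance to $c_{g,\delta,l}$. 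Pulling the rigidity locus back then produces the desired dense open subset of $\cal{V}_{g,\delta,l}$.

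The only genuinely non-trivial input is the dominance of $c_{g,\delta,l}$, whose role is precisely to control the fact that the divisor $p+q\subset C$ is not freely chosen but is cut out by the K3 embedding; granted that, what remains is a short cohomology calculation together with classical Brill--Noether rigidity of generic effective divisors of degree $\leq g_C$.
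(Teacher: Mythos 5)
Your proposal is correct and follows essentially the same route as the paper: reduce via Proposition~\ref{diff.map} to the isomorphism $H^1(C,N_f(-(p+q)-y))\to H^1(C,N_f)$, identify $N_f\cong\omega_C$, translate the condition into $h^0(\cal{O}_C((p+q)+y))=1$, and obtain this from the dominance of $c_{g,\delta,l}$ together with the rigidity of a general effective divisor of degree $2\delta+l\leq g-\delta$ on a general curve. The extra detail you supply (the restriction sequence for $\omega_C|_D$ and the Abel--Jacobi justification) is a faithful expansion of the steps the paper leaves implicit.
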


\begin{proof}
Recall that in the established range the map $\mathcal{V}_{g,\delta,l}\to\mathcal{M}_{g-\delta,[2\delta]+l}$ is dominant. In particular, for a general point $(S,X,x_1,\ldots,x_{\delta},y_1,\ldots,y_l)\in\mathcal{V}_{g,\delta,l}$, the points lying over the nodes and the marked points
$$p+q=(p_1+q_1)+\ldots+(p_\delta+q_\delta),\hspace{0.5cm} y= y_1+\ldots+y_l$$
are general in the symmetric product $\hbox{Sym}^{2\delta+l}C$. Since $S$ is a $K3$ surface, $N_{f}\cong \omega_C$ and the surjection 
$$H^1\left(C,N_f\left(-\sum \tilde{y}_i-E\mid_C\right)\right)\to H^1\left(C,N_f\left(-\sum \tilde{y}_i\right)\right)$$
is an isomorphism if and only if $h^0(C,\mathcal{O}_C((p+q)+y))\leq1$. This holds when $2\delta+l\leq g-\delta$.
\end{proof}
Now we prove the main proposition.
\begin{proof}[Proof of Proposition \ref{diff.map}]
We call $x=x_1+\ldots+x_\delta$ and $y=y_1+\ldots+y_l$ the nodes and marked points in $S$. The map $\eps:\tilde{S}\to S$ is birational and finite when restricted to $C$. Thus,  
$$R^i\eps_*\mathcal{O}_{\tilde{S}}\cong R^i\eps_*\mathcal{O}_{\tilde{S}}(-E-F)\cong 0\hbox{ for }i>0,$$  
and there is an isomorphism coming from the Leray spectral sequence sending isomorphically the map $\beta$ on the right triangle in (\ref{seq2}) to
$$H^1\left(S,T_S\otimes \mathcal{I}_{x+y}\right)\overset{\beta}{\to}H^1\left(X,\mathcal{N}'_{X/S}\right).$$
This map is the composition 
$$\beta:H^1(S,T_S\otimes \mathcal{I}_{x+y})\to H^1(S,T_S)\to H^1(\mathcal{N}'_{X/S}).$$
Thus, elements of $\ker(\beta)$ can be interpreted as first order deformations of the pointed surface $(S,x_1,\ldots,x_\delta,y_1,\ldots,y_l)$ such that, after forgetting the marked points, they lie in the kernel of $H^1(T_S)\to H^1(\mathcal{N}'_{X/S})$, namely, deformations that preserve the genus $g$ marking. This proves the first assertion.  

Lemma \ref{lem} and pushing forward by $\eps$ makes the $H^1$-map of (\ref{ex.seq2}) become isomorphic to 
$$H^1(S,T_S\langle X\mid y_1,\ldots,y_l\rangle) \to H^1(S,T_S \otimes \mathcal{I}_{x+y}).$$
By Proposition \ref{prop}, we have our result.

\end{proof}

\section{Parameter spaces of flags and degeneration to projective cones}
The goal of this section is to show that the general fiber of the normalization map 
$$c_{g,\delta,l}:\mathcal{V}_{g,\delta,l}\to\mathcal{M}_{g-\delta,[2\delta]+l}$$
is irreducible when $g\leq 11$ and $g\neq 10$.\\

Notice that for a scheme $S$ and $S$-schemes $X,Y,Z$, if $X\to Y$ is a birational $S$-map, the same holds for the induced map
$$X\times_SZ\to Y\times_SZ.$$
Therefore, it is enough to prove that the general fiber of 
$$c_{11,\delta}:\cal{V}_{11,\delta}\to\cal{M}_{11-\delta,[2\delta]}$$
is irreducible. It is shown in the proof of \cite[Thm. 2.1]{CD} that the general fiber of 
$$\mathcal{V}_{g,\delta}\big/\Sigma_\delta\to\mathcal{M}_{g-\delta}$$
is irreducible for $\delta$ and $g$ in certain range. This result can be easily extended to the fibers of $c_{g,\delta}$. We extend the argument of \cite[\S 2.1]{CD}, following the same exposition.\\

Let $\hbox{\textbf{F}}_g$ be the component of the Hilbert scheme whose general point parameterizes primitive $K3$ surfaces in $\pp^g$ of degree $2g-2$. The group $\hbox{PGL}(g+1)$ acts on $\hbox{\textbf{F}}_g$, the dimension is $19+(g+1)^2-1$ and the quotient map induces a rational map defined on the locus of smooth K3 surfaces
$$\hbox{\textbf{F}}_g\dashrightarrow\cal{F}_g.$$
This map is, over an open set, a $\hbox{PGL}(g+1)$-bundle. Let $\hbox{\textbf{Hilb}}_g$ be the component of the Hilbert scheme of curves on $\pp^g$ whose general point parameterizes canonical curves lying on a hyperplane of $\pp^g$. The quotient, defined on the open locus of nodal curves,
$$\hbox{\textbf{Hilb}}_g\dashrightarrow\Mgbar,$$ 
is an open subset of a $(\pp^g)^\vee\times\hbox{PGL}(g)$-bundle. We also define $\hbox{\textbf{V}}_g$ to be the component of the flag Hilbert scheme, cf. \cite{Kl}, whose general point is a pair $(S,X)$ with $S$ a general point on $\hbox{\textbf{F}}_g$ and $X$ an hyperplane section of $S$. An open subset of $\hbox{\textbf{V}}_g$ is a $\pp^g$-bundle over $\hbox{\textbf{F}}_g$. Notice that 
$$\dim\left(\hbox{\textbf{Hilb}}_g\right)= 4g-4+g^2\hspace{0.5cm}\hbox{and}\hspace{0.5cm}\dim\left(\hbox{\textbf{V}}_g\right)=18+g+(g+1)^2.$$
The natural forgetful maps sit in the following diagram
\begin{displaymath}
\xymatrix{
&\hbox{\textbf{V}}_g\ar[rd]^{p}\ar[dl]&\\
\hbox{\textbf{F}}_g&&\hbox{\textbf{Hilb}}_g.
}
\end{displaymath}
Our situation is slightly different from \cite{CD}, since we need to keep track of the nodes. For $0\leq \delta\leq g-2$, we define the incidence variety
$$\hbox{\textbf{V}}_{g,\delta}\subset \hbox{\textbf{V}}_g\times (\pp^g)^{\delta}$$
to be the closure of the set of points $(S,X,x_1,\ldots,x_\delta)$ with $X$ irreducible $\delta$-nodal with nodes at $x_1,\ldots,x_\delta$. We denote by $\hbox{\textbf{Nod}}_{g,\delta}$ the closure of the  locally closed subset of $\hbox{\textbf{Hilb}}_g\times(\pp^g)^{\delta}$ consisting of irreducible $\delta$-nodal curves $X$ together with points $(x_1,\ldots,x_\delta)\in(\pp^g)^{\delta}$ such that $X$ is nodal at $x_1,\ldots,x_{\delta}$. The first thing to notice is that $\hbox{\textbf{Nod}}_{g,\delta}$ it is irreducible, since the rational map induced by normalization
$$\begin{array}{rcl}
\hbox{\textbf{Nod}}_{g,\delta}&\dashrightarrow&\mathcal{M}_{g-\delta}\\
(X,x_1,\ldots,x_\delta)&\mapsto&C
\end{array}$$ 
is dominant and the fiber over a general canonical curve $C$ is, up to projective transformations, birationally 
$$(\hbox{Sym}^2C)^{\times \delta}.$$
Notice that $\dim\hbox{\textbf{Nod}}_{g,\delta}=g^2+4g-4-\delta$. Now consider the forgetful map 
$$p_{g,\delta}:\hbox{\textbf{V}}_{g,\delta}\to\hbox{\textbf{Nod}}_{g,\delta}.$$
Notice that 
$$\dim\hbox{\textbf{V}}_{g,\delta}=g^2+3g+19-\delta,$$
since for $0\leq \delta\leq g-2$, $\dim V_\delta(S,H)=\dim|H|-\delta$. One can also observe that the dimension of the general fiber of $p_{g,\delta}$ is at least $23-g$.\\

The irreducibility of the fiber relies on two facts. The first was proved by \cite{P} and states that a smooth $K3$ surface $S\subset \pp^g$ can flatly degenerate inside $\pp^g$ to a projective cone over any hyperplane section $S_X\subset \pp^g$. Moreover, (see \cite[Lemma 2.3]{CD}) one can do this inside the fiber of $p_{g,\delta}$. Thus, if $S_X$ is the projective cone over the nodal curve $X\subset S$ inside $\pp^g$, then $(S_X,X,x_1,\ldots,x_{\delta})$ lies in every irreducible component of the general fiber of $p_{g,\delta}$. \\

The second ingredient is the smoothness of the fiber at the point $(S_X,X,x_1,\ldots,x_{\delta})$. If so, then irreducibility of the general fiber $p_{g,\delta}$ follows and therefore the irreducibility of the general fiber for the induced map on the quotient $c_{g,\delta}$. \\

The map 
$$\begin{array}{rcl}
\hbox{\textbf{V}}_{g,\delta}&\to&\hbox{\textbf{V}}_{g,\delta}\big/\Sigma_\delta\\
(S,X,x_1,\ldots,x_\delta)&\mapsto&(S,X)
\end{array}$$
is \'{e}tale and the tangent space of the fiber at $(S_X,X)$ (see \cite[\S 4.5.2]{S} and \cite[Lemma 2.4]{CD}) is isomorphic to 
$$H^0(S_X,N_{S_X/\pp^g}(-1))\cong \bigoplus_{k\geq 1}H^0(X,N_{X/\pp^g}(-k)).$$ 
The computation of these cohomology groups was done in \cite[\S 3]{CM} by specializing to a \textit{canonical graph curve}, namely, the union of $2g-2$ lines in $\pp^{g-1}$, each meeting three others at distinct points. The dual graph of such a curve is a trivalent graph with $2g-2$ nodes and $3g-3$ edges. For such a curve $\Gamma_g$, when $3\leq g\leq11$ and $g\neq 10$, 
$$\bigoplus_{k\geq 1}H^0(\Gamma_{11},N_{\Gamma_{11}/\pp^{11}}(-k))=H^0(\Gamma_{11},N_{\Gamma_{11}/\pp^{11}}(-1))=23-g.$$
Every irreducible $\delta$-nodal curve degenerates to such a graph curve inside $\pp^{g-1}$, see \cite[Prop 2.6]{CD}. By upper semi-continuity of $h^0(S_X,N_{S_X/\pp^g}(-1))$ one shows that the dimension of the general fiber of $p_{g,\delta}$ is at most $23-g$. Recall that the previous calculations gave us that the general fiber is al least $23-g$. Thus, the general fiber is smooth at $(S_X,X)$ and this point lies in every irreducible component. From this follows the irreducibility of the general fiber of $p_{g,\delta}$ when $3\leq g\leq 11$ and $g\neq 10$. The irreducibility of the general fiber of $p_{g,\delta}$ carries down to the quotient $c_{g,\delta}$. We have proved birationality in Theorem \ref{thm2}. 

\section{The birational type of $\mathcal{F}_{11,n}$}
Theorem \ref{thm2} and Corollary \ref{dom} give us a birational map
$$c_{11,\delta,l}:\mathcal{V}_{11,\delta,l}\to\mathcal{M}_{11-\delta,[2\delta]+l}$$
and a dominant map $\mathcal{V}_{11,\delta,l}\to\mathcal{F}_{11,\delta+l}$, for $3\delta+l\leq 11$.
 
\begin{proof}[Proof of Theorem \ref{thm}]
The moduli space $\mathcal{M}_{11-\delta,[2\delta]+l}$ is the $\zz_2^{\oplus \delta}$-quotient of $\mathcal{M}_{11-\delta,2\delta+l}$ and it is known \cite[Thm 7.1]{L} that $\mathcal{M}_{9,n}$ is unirational for $n\leq 8$ and therefore its quotient. In particular $\mathcal{V}_{11,2,4}$ is unirational and dominates $\mathcal{F}_{11,6}$. On the other hand $\mathcal{M}_{9,n}$ is uniruled for $n\leq 10$, cf. \cite[Thm. 0.6]{FV1}. Thus, the moduli space $\mathcal{V}_{11,2,5}$ is uniruled and the map $\mathcal{V}_{11,2,5}\to\mathcal{F}_{11,7}$ is dominant and generically finite. 
\end{proof}

\begin{remark}
Notice that with $\delta=1$ and $l=7$, one has a dominant rational map $\mathcal{M}_{10,9}\dashrightarrow \mathcal{F}_{11,8}$ where the source is uniruled, cf. \cite[Prop. 7.6]{FP}. It would be enough to show that the relative dimension one map $\mathcal{V}_{11,1,7}\to\mathcal{F}_{11,8}$, does not contract  the covering rational curves of $\mathcal{M}_{10,9}$. Unfortunately, by a similar argument as in the next lemma, one can see that the fibers of the map are indeed rational curves. 
\end{remark}

We also have:

\begin{lemma}
For $g\leq 11$, $g\neq 10$, and $3\delta+l=g$, the map
$$\pi:\mathcal{V}_{g,\delta,l}\to\mathcal{F}_{g,\delta+l}$$
is birational. 
\end{lemma}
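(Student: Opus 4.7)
The plan is to derive birationality of $\pi$ from the dominance established in Corollary \ref{dom} combined with a direct linear-algebra analysis of the general fiber inside the linear system $|H|\cong\mathbb{P}^g$.

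First I would compare dimensions: from the definitions $\dim\mathcal{V}_{g,\delta,l}=19+(g-\delta)+l$ and $\dim\mathcal{F}_{g,\delta+l}=19+2(\delta+l)$, so the two coincide precisely when $3\delta+l=g$. Combined with Corollary \ref{dom} this already forces $\pi$ to be generically finite. In fact, tracing through Proposition \ref{diff.map} at the boundary $3\delta+l=g$, one sees that $d\pi$ is an isomorphism at a general point: since $N_f\cong\omega_C$ and $(p+q)+y$ is a general effective divisor of degree $2\delta+l=g-\delta$ on a curve of genus $g-\delta$, it is nonspecial, so $h^0(\mathcal{O}_C((p+q)+y))=1$ and the inclusion map $H^1(C,N_f(-(p+q)-y))\to H^1(C,N_f)$ is an isomorphism, giving generic étaleness.

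Next I would describe the general fiber of $\pi$ as a linear intersection. A point of $\pi^{-1}(S,H,x_1,\ldots,x_\delta,y_1,\ldots,y_l)$ corresponds to a nonzero section $s\in H^0(S,H)$, modulo scalar, lying in $L_\delta\cap L_y$, where $L_\delta\subset H^0(S,H)$ is the subspace of sections vanishing to order at least $2$ at each $x_i$ and $L_y\subset H^0(S,H)$ is the subspace of sections vanishing at each $y_j$. These are linear of codimension at most $3\delta$ and at most $l$, so their projectivizations in $|H|\cong\mathbb{P}^g$ meet in a linear subspace of dimension at least $g-3\delta-l=0$. Since $\pi$ is both dominant and generically finite, the fiber is nonempty and zero-dimensional, forcing $\mathbb{P}(L_\delta)\cap\mathbb{P}(L_y)$ itself to have dimension zero, i.e.\ to be a single reduced point of $\mathbb{P}^g$. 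Thus the general fiber of $\pi$ consists of a unique curve $X\in|H|$, and $\pi$ is birational.

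The main subtlety is verifying that the $3\delta+l$ linear conditions cutting out $\mathbb{P}(L_\delta)\cap\mathbb{P}(L_y)$ are independent for generic input, equivalently that the intersection attains its expected dimension zero. This is automatic from the combination of dominance and matching of dimensions established above: any failure of independence would produce a positive-dimensional linear space containing the fiber, which would therefore also be positive-dimensional, contradicting generic finiteness. No further degeneration argument is required beyond the results already proved.
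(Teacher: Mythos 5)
Your setup --- the dimension count showing $\dim\mathcal{V}_{g,\delta,l}=\dim\mathcal{F}_{g,\delta+l}$ exactly when $3\delta+l=g$, generic finiteness of $\pi$ from Corollary \ref{dom}, and the identification of the fiber with (a subset of) the linear space $\mathbb{P}(L_\delta\cap L_y)=\mathbb{P}\left(H^0(S,H\otimes\mathcal{I}_{2x+y})\right)$ --- is exactly the paper's strategy. But the final inference, which is where the actual content of the lemma lives, is a non sequitur as written: you argue that a positive-dimensional linear space ``containing the fiber \dots would therefore also be positive-dimensional.'' Containment only bounds the fiber from above; a finite set sits happily inside a positive-dimensional linear system. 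The fiber of $\pi$ is not all of $\mathbb{P}(L_\delta\cap L_y)$ but only the locus of members that are irreducible and genuinely $\delta$-nodal at $x_1,\ldots,x_\delta$ (not cuspidal or worse there, and with no further singularities), so finiteness of the fiber forces $\dim\mathbb{P}(L_\delta\cap L_y)=0$ only once you know the fiber is \emph{dense} in $\mathbb{P}(L_\delta\cap L_y)$.

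That density is precisely what the paper's proof supplies and is the one missing ingredient in yours. The paper passes to the blow-up $\eps:\tilde{S}\to S$ at the marked points, identifies $H^0(S,H\otimes\mathcal{I}_{2x+y})$ with $H^0(\tilde{S},\eps^*H-2E-F)$, and uses Corollary \ref{dom} to exhibit one member of $|\eps^*H-2E-F|$ that is smooth and meets $E$ and $F$ transversally; since these are open conditions on an irreducible linear system, the general member has the same property, and pushing forward shows the general member of $\mathbb{P}(L_\delta\cap L_y)$ is an irreducible curve nodal at $x$ through $y$ --- i.e., the fiber is a dense open subset of the linear space. Only then does ``finitely many such curves'' imply that the linear space is a single point. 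Your argument can be repaired along the same lines (nodality at each $x_i$, absence of other singularities, and irreducibility are open conditions on $\mathbb{P}(L_\delta\cap L_y)$, and the fiber is nonempty by dominance, hence dense by irreducibility of projective space), but as it stands the crucial step is carried by a false implication rather than a proof.
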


\begin{proof}
Let $(S,H, x_1,\ldots,x_\delta,y_1,\ldots,y_{l})$ be a general point in $\mathcal{F}_{g,\delta,l}$. By Corollary \ref{dom}, the fiber of $\pi$ over the point $(S,H, x_1,\ldots,x_\delta,y_1,\ldots,y_{l})$ corresponds to a subset of the linear system
$$|H\otimes\frak{m}_{x_1}^2\otimes\ldots\otimes\frak{m}_{x_\delta}^2\otimes\frak{m}_{y_1}\otimes\ldots\otimes\frak{m}_{y_l}|\subset|H|$$
containing a non-empty and zero-dimensional open, whose general element is nodal. Here $\frak{m}_x$ stands for the maximal ideal of the point $x$. Thus, the general fiber consist of a single point. 
\end{proof}

As a consequence we have Theorem \ref{bir}.

\begin{proof}[Proof of Theorem \ref{bir}]
By the lemma above, the map $\mathcal{V}_{11,\delta,l}\to\mathcal{F}_{11,\delta+l}$ is birational when $3\delta+l=11$. Composing with the already constructed birational map 
$$\mathcal{V}_{11,\delta,l}\to\mathcal{M}_{11-\delta,[2\delta]+l}$$
gives us our result.
\end{proof}

In particular $\mathcal{V}_{11,1,8}\to\mathcal{F}_{11,9}$ is birational. This gives us a birational map 
$$\mathcal{M}_{10,10}\big/\zz_2\overset{\sim}{\dashrightarrow} \mathcal{F}_{11,9}.$$

\begin{proof}[Proof of Theorem \ref{thm3}]
It is known (cf. \cite[Thm. 0.1]{FV1}) that the Kodaira dimension of the universal Jacobian over $\mathcal{M}_{10}$
$$\hbox{Kod}\left(\overline{\sh{J}ac}_{10}^{10}\right)=0.$$
There is a generically finite rational map
$$\overline{\mathcal{M}}_{10,10}\big/\zz_2\to \overline{\mathcal{M}}_{10,10}\big/\Sigma_{10}\overset{\sim}{\dashrightarrow}\overline{\sh{J}ac}_{10}^{10},$$
where $\Sigma_{10}$ is the symmetric group on $10$ letters. Thus, 
$$\hbox{Kod}(\overline{\mathcal{M}}_{10,10}\big/\zz_2)\geq 0.$$
For $n\geq 9$, the result follows from the inequality $\hbox{Kod}(\Fgn)\leq \hbox{Kod}(\mathcal{F}_{g,n+1})$.
\end{proof}



\begin{thebibliography}{aaaaa}

\bibitem[Ba]{Ba} I. Barros, {\em{Uniruledness of strata of holomorphic differentials in small genus}}, Advances in Mathematics \textbf{333} (2018), 670--693.
\bibitem[Be]{Be} A. Beauville, {\em{Fano threefolds and K3 surfaces}}, Proceedings of the Fano Conference (2004), 175-184.
\bibitem[C1]{C1} X. Chen, {\em{Rational curves on K3 surfaces}}, Journal of Algebraic Geometry \textbf{8} (1999), 245--278.
\bibitem[C2]{C2} X. Chen, {\em{Nodal curves on K3 surfaces, arXiv: 1611.07423}}.
\bibitem[CD]{CD} C. Ciliberto, T. Dedieu, {\em{On universal Severi varieties of low genus $K3$ surfaces}}, Mathematische Zeitschrift \textbf{271} (2012), 3:953--960.
\bibitem[CFGK]{CFGK} C. Ciliberto, F. Flamini, C. Galati, and A. L. Knutsen, {\em{Moduli of nodal curves on K3 surfaces}}, Advances in Mathematics \textbf{309} (2017), 624--654.
\bibitem[CLM]{CLM} C. Ciliberto, A. Lopez, and R. Miranda, {\em{Projective degenerations of $K3$ Surfaces, Gaussian maps, and Fano threefolds}}, Inventiones mathematicae \textbf{114} (1993), 641--667.
\bibitem[CM]{CM} C. Ciliberto and R. Miranda, {\em{On the Gaussian map for canonical curves of low genus}}, Duke Mathematical Journal, \textbf{61} (1990), 2:417--443.
\bibitem[Fl]{Fl} F. Flamini, {\em{Some results of regularity for Severi varieties of projective surfaces}}, Communications in Algebra \textbf{29} (2001), 2297--2311.
\bibitem[FKPS]{FlKPS} F. Flamini, A. L. Knutsen, G. Pacienza, and E. Sernesi, {\em{Nodal curves with general moduli on $K3$ surfaces}}, Communications in Algebra \textbf{36} (2008), 1:3955--3971.
\bibitem[FP]{FP} G. Farkas and M. Popa, {\em{Effective divisors on $\overline{\mathcal{M}}_{g}$, curves on $K3$ surfaces and the Slope Conjecture}}, Journal of Algebraic Geometry {\textbf{14}} (2005), 151--174. 
\bibitem[FV1]{FV1} G. Farkas and A. Verra, {\em{The classification of universal Jacobians over the moduli space of curves}}, Commentarii Mathematici Helvetici {\textbf{88}} (2013), 587--611.
\bibitem[FV2]{FV2} G. Farkas and A. Verra, {\em{The universal $K3$ surface of genus $14$ via cubic fourfolds}}, Journal de Math\'{e}matiques Pures et Appliqu\'{e}es \textbf{111} (2018), 1--20.
\bibitem[GHS]{GHS} V. Gritsenko, K. Hulek, and G.K. Sankaran, {\em{The Kodaira dimension of the moduli space of $K3$ surfaces}}, Inventiones mathematicae \textbf{169} (2007), 519--567.
\bibitem[Ka]{Ka} Y. Kawamata, {\em{The Kodaira dimension of certain fibre spaces}}, Proceedings of the Japan Academy, \textbf{5} (1979), 406--408.
\bibitem[Kl]{Kl} J. O. Kleppe, {\em{The Hilbert flag scheme, its properties and its connection with the Hilbert scheme. Applications to curves in 3-space}}, PhD thesis, University of Oslo, 1981.
\bibitem[L]{L} A. Logan, {\em{The Kodaira dimension of moduli spaces of curves with marked points}}, American Journal of Mathematics, \textbf{125} (2003), 105--138.
\bibitem[M1]{M1} S. Mukai, {\em{Curves, $K3$ surfaces and Fano $3$-folds of genus $\leq 10$}}, in: Algebraic Geometry and Commutative Algebra in Honor of M. Nagata,
357-377, Kinokuniya, Tokyo, 1988.
\bibitem[M2]{M2} S. Mukai, {\em{Curves and symmetric spaces II}}, Annals of Mathematics \textbf{172} (2010), 1539-1558.
\bibitem[M3]{M3} S. Mukai, {\em{Polarized $K3$ surfaces of genus $18$ and $20$}}, in: Complex Projective Geometry, London Math. Soc. Lecture Notes Series \textbf{179}, 264-276, Cambridge University Press 1992.
\bibitem[M4]{M4} S. Mukai, {\em{Polarised $K3$ surfaces of genus $13$}}, in: Moduli Spaces and Arithmetic Geometry (Kyoto 2004), Advanced Studies in Pure Mathematics \textbf{45}, 315-326, 2006.
\bibitem[M5]{M5} S. Mukai, {\em{$K3$ surfaces of genus $16$}}, RIMS preprint 1743, 2012,  available at http://www.kurims.kyoto-u.ac.jp/preprint/file/RIMS1743.pdf.
\bibitem[M6]{M6} S. Mukai, {\em{Curves and $K3$ surfaces of genus eleven}}, in: Moduli of vector bundles, Lecture Notes in Pure and Applied Mathematics Vol. 179 (1996), 189--197.
\bibitem[P]{P} H. C. Pinkham, {\em{Deformations of algebraic varieties with {$G_{m}$} action}}, Ast\'{e}risque No. {\bf 20}, Soci\'{e}t\'{e} Math\'{e}matique de France, Paris, 1974.
\bibitem[S]{S} E. Sernesi, {\em{Deformations of algebraic schemes}}, Grundlehren der Mathematischen Wissenschaften Vol. 334, Springer-Verlag 2006.
\bibitem[T]{T} A. Tannenbaum, {\em{Families of curves with nodes on $K3$ surfaces}}, Mathematische Annalen \textbf{260} (1982), 239--253.


\end{thebibliography}
\end{document}